%
%
%

\documentclass{article}


\usepackage{geometry}
\usepackage{makeidx}         
\usepackage{graphicx}        
\usepackage{multicol}        
\usepackage[bottom]{footmisc}

\usepackage{color}

\usepackage{epsfig}
\usepackage{wrapfig}
\usepackage{subfig}

\usepackage{enumitem}

\usepackage{amsmath,amsfonts,amssymb}
\usepackage{hyperref}

\newcommand{\abs}[1]{|#1|}
\newcommand{\cK}{\mathcal{K}}
\newcommand{\dd}[1][x]{\,\operatorname{d}\!#1}
\newcommand{\DD}{\operatorname{D}\!}
\newcommand{\integral}[3]{\int_{#1} \; {#2} \; \dd[#3]\,}
\newcommand{\integrall}[4]{\int_{#1}^{#2} \; {#3} \; \dd[#4]\,}
\newcommand{\integralm}[3]{\int_{#1} \; {#2} \; {#3}\,}

\newcommand{\N}{\mathbb{N}}
\newcommand{\R}{\mathbb{R}}

\newcommand{\set}[2]{\{\, {#1} \,|\, {#2} \,\}}

\newcommand{\sdiff}[2]{\partial_{#2} {#1}} 
\newcommand{\sdifff}[3]{\partial^{#3}_{#2} {#1}}

\newcommand{\Diff}[2]{\frac{\dd[#1]}{\dd[#2]}}


\newcommand{\Caputo}[1]{\mathcal{D}^\alpha_{#1}}
\newcommand{\Caputoo}[2]{\mathcal{D}^{#2}_{#1}}
\newcommand{\Dalpha}[1][\alpha]{\Caputoo{+}{#1}}

\newcommand{\Fourier}{\operator{F}} 
\newcommand{\FVar}{k} 
\newcommand{\RFalpha}{a}
\newcommand{\RFtheta}{\theta}
\newcommand{\RFsymbol}{\psi^{\RFalpha}_{\RFtheta}}
\newcommand{\RieszFeller}{D^{\RFalpha}_{\RFtheta}}
\newcommand{\RieszFellerII}[2]{D^{#1}_{#2}}
\newcommand{\Riesz}{\RieszFellerII{\RFalpha}{0}}
\newcommand{\Green}{G^{\RFalpha}_{\RFtheta}}  
\newcommand{\GreenII}[2]{G^{#1}_{#2}}  
\newcommand{\RFall}{\mathfrak{D}_{\RFalpha,\RFtheta}}  
\newcommand{\RFnonlocal}{\mathfrak{D}_{\RFalpha,\RFtheta}^\bullet}  
\newcommand{\RFnontrivial}{\mathfrak{D}_{\RFalpha,\RFtheta}^{\diamond}}  
\newcommand{\RFnonextremal}{\mathfrak{D}_{\RFalpha,\RFtheta}^{+}}  
\newcommand{\Levy}{\operator{L}}
\newcommand{\operator}[1]{\mathcal{#1}}
\DeclareMathOperator{\sgn}{sgn}
\newcommand{\euler}{\operatorname{e}}
\newcommand{\ii}{\operatorname{i}}

\DeclareMathOperator{\dom}{dom}

\newcommand{\um}{u_-}
\newcommand{\up}{u_+}
\newcommand{\upm}{u_\pm}

\newcommand{\profile}{\bar{u}}

\newcommand{\uStar}{u_*}
\newcommand{\ShockTriple}{(\um,\up;\speed)}
\newcommand{\TWS}{(\profile,\speed)}
\newcommand{\RDspeed}{c}
\newcommand{\speed}{c}

\newcommand{\xx}[1]{\,\text{ #1 }\,}
\newcommand{\Xx}[1]{\quad\text{ #1 }\,}
\newcommand{\XX}[1]{\quad\text{ #1 }\quad}


\newcommand{\SchwartzTF}{\mathcal{S}}

\newcommand{\FourierInv}{\mathcal{F}^{-1}}
                       
\usepackage{amsthm}
\theoremstyle{plain} 
\newtheorem{theorem}{Theorem}
\newtheorem{lemma}[theorem]{Lemma}
\newtheorem{proposition}{Proposition}

\theoremstyle{definition}
\newtheorem{definition}{Definition}

\theoremstyle{remark}
\newtheorem{remark}{Remark}
\newtheorem{example}{Example}


\begin{document}

\title{Two classes of nonlocal Evolution Equations related by a shared Traveling Wave Problem}
\author{Franz Achleitner \footnote{TU Wien, Institute for Analysis and Scientific Computing, Wiedner Hauptstrasse 8-10, A-1040 Wien, Austria, \href{mailto:franz.achleitner@tuwien.ac.at}{franz.achleitner@tuwien.ac.at}}}

\maketitle

\abstract{
We consider reaction-diffusion equations and Korteweg-de Vries-Burgers (KdVB) equations,
 i.e. scalar conservation laws with diffusive-dispersive regularization.
We review the existence of traveling wave solutions for these two classes of evolution equations.
For classical equations the traveling wave problem (TWP) for a local KdVB equation 
 can be identified with the TWP for a reaction-diffusion equation.
In this article we study this relationship for these two classes of evolution equations with nonlocal diffusion/dispersion.
This connection is especially useful, if the TW equation is not studied directly,
 but the existence of a TWS is proven using one of the evolution equations instead.
Finally, we present three models from fluid dynamics 
 and discuss the TWP via its link to associated reaction-diffusion equations.
}

\section{Introduction}
We will consider two classes of (nonlocal) evolution equations 
 and study the associated traveling wave problems in parallel:
On the one hand, we consider scalar conservation laws with (nonlocal) diffusive-dispersive regularization
 \begin{equation} \label{KdVB:f:nonlocal}
   \sdiff{u}{t} + \sdiff{}{x} f(u) = \epsilon \Levy_1[u] + \delta \sdiff{}{x} \Levy_2[u] \ , \quad t>0 \ ,\quad x\in\R \ ,
 \end{equation}
 for some nonlinear function $f:\R\to\R$, L\'evy operators $\Levy_1$ and $\Levy_2$,
 as well as constants $\epsilon,\delta\in\R$.
The Fourier multiplier operators $\Levy_1$ and $\sdiff{}{x} \Levy_2$ model diffusion and dispersion, respectively. 
On the other hand, we consider scalar reaction-diffusion equations
 \begin{equation} \label{RD:r:nonlocal}
   \sdiff{u}{t} = r(u) + \sigma \Levy_3[u] \ , \quad t>0 \ ,\quad x\in\R \ ,
 \end{equation}
 for some positive constant $\sigma$, as well as a nonlinear function $r:\R\to\R$ and a L\'evy operator $\Levy_3$
 modeling reaction and diffusion, respectively.

\begin{definition} \label{def:TWS}
A traveling wave solution (TWS) of an evolution equation--such as~\eqref{KdVB:f:nonlocal} and~\eqref{RD:r:nonlocal}--is
 a solution $u(x,t)=\profile(\xi)$ whose \emph{profile} $\profile$ depends on $\xi:=x-\speed t$ for some \emph{wave speed} $\speed$.
Moreover, the profile $\profile\in C^2(\R)$ is assumed to approach distinct endstates~$\upm$ such that
\begin{equation}\label{TWS:limits}
 \lim_{\xi\to\pm\infty} \profile(\xi) =\upm \ , \qquad
 \lim_{\xi\to\pm\infty} \profile^{(n)}(\xi) =0 \quad \text{with $n=1,2$.}
\end{equation}
\end{definition}
Such a TWS is also known as a \emph{front} in the literature.
A TWS $(\profile,\RDspeed)$ is called monotone, if its profile $\profile$ is a monotone function. 
\begin{definition} \label{def:TWP}
The traveling wave problem (TWP) associated to an evolution equation
 is to study for some distinct endstates $\upm$
 the existence of a TWS~$\TWS$ in the sense of Definition~\ref{def:TWS}.
\end{definition}
We want to identify classes of evolutions equations of type~\eqref{KdVB:f:nonlocal} and~\eqref{RD:r:nonlocal},
 which lead to the same TWP. 
This connection is especially useful, if the TWP is not studied directly,
 but the existence of a TWS is proven using one of the evolution equations instead.
A classical example of~\eqref{KdVB:f:nonlocal} is a scalar conservation law with local diffusive-dispersive regularization
 \begin{equation} \label{KdVB:f:local}
   \sdiff{u}{t} + \sdiff{}{x} f(u) = \epsilon \sdifff{u}{x}{2} +\delta \sdifff{u}{x}{3} \ , \quad t>0 \ ,\quad x\in\R \ ,
 \end{equation}
 for some nonlinear function $f:\R\to\R$ and some constants $\epsilon>0$ and $\delta\in\R$.
Equation~\eqref{KdVB:f:local} with Burgers flux $f(u)=u^2$ is known as Korteweg-de Vries-Burgers (KdVB) equation;
 hence we refer to Equation~\eqref{KdVB:f:local} with general $f$ as \emph{generalized} KdVB equation
 and Equation~\eqref{KdVB:f:nonlocal} as \emph{nonlocal generalized} KdVB equation.
A TWS~$\TWS$ satisfies the traveling wave equation (TWE)
 \begin{equation}\label{intro:KdVB:TWE:local}
  -c \profile' + f'(\profile)\ \profile' = \epsilon \profile'' + \delta \profile''' \ , \quad \xi\in\R \ , 
 \end{equation}
or integrating on $(-\infty,\xi]$ and using~\eqref{TWS:limits},
 \begin{equation}\label{intro:KdVB:TWE:local:integrated}
  h(\profile) := f(\profile) -c \profile -(f(\um) -c \ \um) = \epsilon \profile' + \delta \profile'' \ , \quad \xi\in\R \ . 
 \end{equation}
However, the TW ansatz $v(x,t)=\profile(x -\epsilon t)$ for the scalar reaction-diffusion equation
 \begin{equation} \label{RD:h:local}
  \sdiff{v}{t} = -h(v) +\delta \sdifff{v}{x}{2} \ , \quad t>0 \ , \quad x\in\R \ , 
 \end{equation}
 leads to the same TWE~\eqref{intro:KdVB:TWE:local:integrated}
 except for a different interpretation of the parameters.
The traveling wave speeds in the TWP of~\eqref{KdVB:f:local} and~\eqref{RD:h:local} are~$\speed$ and~$\epsilon$, respectively.
For fixed parameters $\speed$, $\epsilon$, and $\delta$,
 the existence of a traveling wave profile $\profile$ satisfying~\eqref{TWS:limits} and~\eqref{intro:KdVB:TWE:local:integrated}
 reduces to the existence of a heteroclinic orbit for this ODE.
This is an example, where the existence of TWS is studied directly via the TWE.

A first example, where the TWE is not studied directly,
 is the TWP for a nonlocal KdVB equation~\eqref{KdVB:f:nonlocal} with
 $\Levy_1[u] =\sdifff{u}{x}{2}$ and $\Levy_2[u] = \phi_\epsilon \ast u - u$
 for some even non-negative function $\phi\in L^1(\R)$ with compact support and unit mass,
 where $\phi_\epsilon(\cdot) := \phi(\cdot/\epsilon) / \epsilon$ with $\epsilon>0$.
It has been derived as a model for phase transitions with long range interactions close to the surface,
 which supports planar TWS associated to undercompressive shocks of~\eqref{SCL}, see~\cite{Rohde:2005}. 
In particular, the TWP for a cubic flux function $f(u)=u^3$ is related  
 to the TWP for a reaction-diffusion equation~\eqref{RD:r:nonlocal} with $\Levy_3[u] = \Levy_2[u]$.
The existence of TWS for this reaction-diffusion equation has been proven
 via a homotopy of~\eqref{RD:r:nonlocal} to a classical reaction-diffusion model~\eqref{RD:h:local}, see~\cite{Bates+etal:1997}.

\emph{Outline.} 
In Section~\ref{sec:Levy} we collect background material on L\'evy operators~$\Levy$,
 which will model diffusion in our nonlocal evolution equations.
Special emphasize is given to convolution operators and Riesz-Feller operators.
In Section~\ref{sec:local} we review the classical results on the TWP for reaction-diffusion equations~\eqref{RD:h:local}
 and generalized Korteweg-de Vries-Burgers equation~\eqref{KdVB:f:local}.
We study their relationship in detail, especially the classification of function $h(u)$,
 which will be used again in Section~\ref{sec:nonlocal}.
In Section~\ref{sec:nonlocal}, first we review the results on TWP for nonlocal reaction-diffusion equations~\eqref{RD:r:nonlocal}
 with operators $\Levy_3$ of convolution type and Riesz-Feller type, respectively.
Finally, we study the example of nonlocal generalized Korteweg-de Vries-Burgers equation modeling a shallow water flow~\cite{Kluwick+etal:2010}
 and Fowler's equation modeling dune formation~\cite{Fowler:2002},
\begin{equation} \label{dune}
  \sdiff{u}{t} + \sdiff{}{x} u^2= \sdifff{}{x}{2} u -\sdiff{}{x} \Dalpha[1/3] u \ , \quad t>0 \ , \quad x\in\R \ ,
\end{equation}
 where $\Dalpha$ is a Caputo derivative.
In the Appendix, we collect background material on Caputo derivatives~$\Caputo{+}$
 and the shock wave theory for scalar conservation laws,
 which will explain the importance of the TWP for KdVB equations.

\emph{Notations.}
We use the conventions in probability theory,
 and define the Fourier transform $\Fourier$ and its inverse $\FourierInv$ for $g\in L^1(\R)$ and $x,k\in\R$ as 
\[
  \Fourier [g](\FVar) := \integral{\R}{ \euler^{+\ii \FVar x} g(x)}{x} \ ; \qquad
  \FourierInv [g](x)  := \tfrac{1}{2\pi} \integral{\R}{ \euler^{-\ii \FVar x} g(\FVar)}{\FVar} \ .
\]
In the following, $\Fourier$ and $\FourierInv$ will denote also their respective extensions to $L^2(\R)$.

\section{L\'evy Operators} \label{sec:Levy}
A L\'evy process is a stochastic process
 with independent and stationary increments 
 which is continuous in probability~\cite{Applebaum:2009, Jacob:2001, Sato:1999}.
Therefore a L\'evy process is characterized by its transition probabilities~$p(t,x)$, 
which evolve according to an evolution equation 
\begin{equation}
 \sdiff{p}{t} = \Levy p
\end{equation}
for some operator $\Levy$, also called a \emph{L\'evy operator}.
First, we define L\'evy operators on the function spaces
 $C_0(\R) :=\{ f\in C(\R) \,|\, \lim_{|x|\to\infty} f(x)=0 \}$ and
 $C^2_0(\R) :=\{ f,f',f'' \in C_0(\R)\}$.

\begin{definition}
The family of L\'evy operators in one spatial dimension consists of operators~$\Levy$ defined 
 for $f\in C^2_0(\R)$ as
 \begin{equation} \label{op:Levy} 
  \Levy f(x) = \tfrac12 A f''(x) 
   + \gamma\ f'(x)  
   + \integralm{\R}{ \big( f(x+y) - f(x) - y\ f'(x) 1_{(-1,1)}(y) \big) }{\nu( \dd[y])}
 \end{equation}
 for some constants $A\geq 0$ and $\gamma\in\R$,
 and a measure~$\nu$ on $\R$ satisfying
 \begin{equation*} 
    \nu(\{0\}) = 0 \quad \text{and} \quad \integralm{\R}{ \min(1,|y|^2)}{\nu(\dd[y])} < \infty \,.
   \end{equation*}
\end{definition}

\begin{remark} 
 The function $f(x+y) - f(x) - y\ f'(x) 1_{(-1,1)}(y)$ 
  is integrable with respect to $\nu$, 
 because it is bounded outside of any neighborhood of $0$ and 
 \[
  f(x+y) - f(x) - y\ f'(x) 1_{(-1,1)}(y) = O(|y|^2) \quad \text{as} \quad |y|\to 0
 \]
 for fixed $x$. 
The indicator function $c(y) =1_{(-1,1)}(y)$ is only one possible choice
 to obtain an integrable integrand.
More generally, let $c(y)$ be a bounded measurable function from $\R$ to $\R$ satisfying 
 $c(y) = 1+o(|y|)$ as $|y|\to 0$, and $c(y) = O(1/|y|)$ as $|y|\to \infty$.
Then~\eqref{op:Levy} is rewritten as 
 \begin{multline} \label{op:Levy:c}
  \Levy f(x) = \tfrac12 A f''(x) 
   + \gamma_c\ f'(x) 
   + \integralm{\R}{ \big( f(x+y) - f(x) - y\ f'(x) c(y) \big) }{\nu( \dd[y])} \ ,                
 \end{multline}
 with $\gamma_c = \gamma + \integralm{\R}{ y\ (c(y)-1_{(-1,1)}(y)) }{\nu(\dd[y])}$. 
 \newline 

Alternative choices for $c$: 
\begin{enumerate}
 \item[(c 0)]
  If a L\'evy measure $\nu$ satisfies
	 $\integralm{|y|< 1}{ |y|}{\nu(\dd[y])} < \infty$  
   then $c\equiv 0$ is admissible. 
 \item[(c 1)] 
  If a L\'evy measure $\nu$ satisfies  
   $\integralm{|y|> 1}{ |y|}{\nu(\dd[y])} < \infty$
  then $c\equiv 1$ is admissible.
\end{enumerate}
 We note that $A$ and $\nu$ are invariant no matter what function~$c$ we choose. 
\end{remark}

\textbf{Examples.}
\begin{enumerate}[label=(\alph*)]
\item The L\'evy operators
 \begin{equation} \label{IG:compoundPoisson}
   \Levy f = \integralm{\R}{ \big( f(x+y) - f(x) \big) }{\nu( \dd[y])}
 \end{equation}
 are infinitesimal generators associated to a compound Poisson process with finite L\'evy measure $\nu$ satisfying (c 0). 
The special case of $\nu(\dd[y]) = \phi(-y) \dd[y]$ for some function $\phi\in L^1(\R)$ yields
 \begin{equation} \label{IG:compoundPoisson:II}
   \Levy f (x) 
	         = \integralm{\R}{ \big( f(x+y) - f(x) \big) }{\phi(-y) \dd[y]}
	         = \big( \phi \ast f - \integral{\R}{\phi}{y}\ f\big) (x) \ . 
 \end{equation}
\item \emph{Riesz-Feller operators.}
The Riesz-Feller operators of order $\RFalpha$ and asymmetry $\RFtheta$ are defined as Fourier multiplier operators
\begin{equation} \label{eq:RF:transform}
 \Fourier [\RieszFeller f](\FVar) = \RFsymbol (\FVar)\ \Fourier [f] (\FVar) \ ,\qquad  \FVar\in\R \ ,
\end{equation}
with symbol $\RFsymbol(\FVar) = -|\FVar|^{\RFalpha} \exp\left[\ii\ \sgn (\FVar)\ \RFtheta \pi/2 \right]$ such that $(\RFalpha,\RFtheta) \in \RFall$ and
\[ \RFall := \set{ (\RFalpha,\RFtheta)\in\R^2 }{ 0<\RFalpha \leq 2 \ , \quad |\RFtheta| \leq \min\{\RFalpha, 2-\RFalpha\} }\ . \]

\begin{figure}
\begin{center}
 \includegraphics{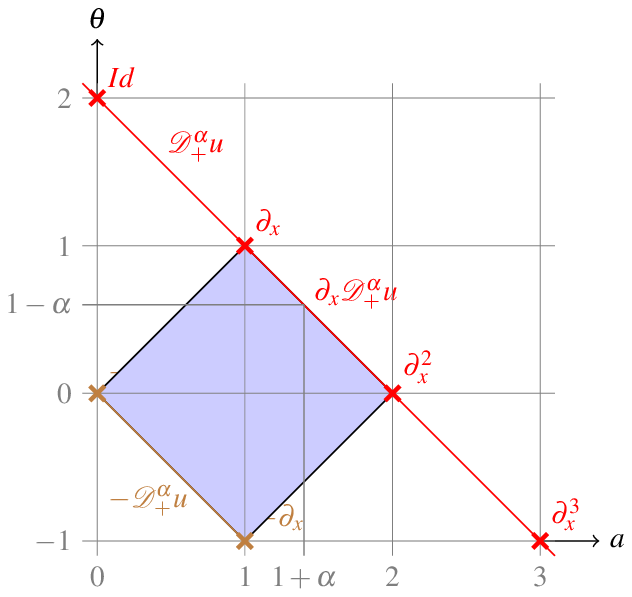}
\end{center}
\caption{
  The family of Fourier multipliers $\RFsymbol(\FVar) = -|\FVar|^{\RFalpha} \exp\big[\ii \sgn(\FVar) \RFtheta \pi /2)\big]$ 
  has two parameters $\RFalpha$ and $\RFtheta$.
  Some Fourier multiplier operators
	 $\Fourier[Tf](\FVar) = \RFsymbol(\FVar)\  \Fourier[f](\FVar)$
  are inserted in the parameter space $(\RFalpha,\RFtheta)$: 
	partial derivatives and Caputo derivatives~$\Caputo{+}$ with $0<\alpha<1$. 
  The Riesz-Feller operators~$\RieszFeller$ are those operators
	 with parameters $(\RFalpha,\RFtheta)\in\RFall$.
  The set~$\RFall$ is also called \emph{Feller-Takayasu diamond} and depicted as a shaded region,
   see also~\cite{Mainardi+Luchko+Pagnini:2001}.
}
\label{fig:RieszFeller}
\end{figure}

Special cases of Riesz-Feller operators are 
\begin{itemize}
\item Fractional Laplacians $-(-\Delta)^{\RFalpha/2}$ on $\R$ with Fourier symbol $-|k|^{\RFalpha}$ for $0<\RFalpha \leq 2$.
 In particular, fractional Laplacians are the only symmetric Riesz-Feller operators
  with $-(-\Delta)^{\RFalpha/2} = \RieszFellerII{\RFalpha}{0}$ and $\RFtheta\equiv 0$.
\item Caputo derivatives $-\Caputo{+}$ with $0<\alpha <1$ are Riesz-Feller operators with $\RFalpha =\alpha$ and $\theta =-\alpha$, such that $-\Caputo{+} =\RieszFellerII{\alpha}{-\alpha}$, see also Section~\ref{sec:Caputo}.
\item Derivatives of Caputo derivatives $\sdiff{}{x} \Caputo{+}$ with $0<\alpha <1$ are Riesz-Feller operators with $\RFalpha =1+\alpha$ and $\theta =1-\alpha$, such that $\sdiff{}{x} \Caputo{+} =\RieszFellerII{1+\alpha}{1-\alpha}$.
\end{itemize}
\end{enumerate}
Next we consider the Cauchy problem
\begin{equation} \label{eq:linearRF}
\sdiff{u}{t}(x,t) = \RieszFeller{} [u(\cdot ,t)](x) \ , \quad u(x,0) =u_0(x)\ ,
\end{equation}
for $(x,t)\in\R\times (0,\infty)$ and initial datum $u_0$.

\begin{proposition} \label{prop:MarkovSG}
For $(\RFalpha,\RFtheta)\in\RFall$ and $1\leq p<\infty$, 
 the Riesz-Feller operator~$\RieszFeller$ generates
 a strongly continuous $L^p$-semigroup
 \[
  S_t: L^p(\R) \to L^p(\R)\ , \quad u_0 \mapsto S_t u_0 = \Green(\cdot,t)\ast u_0 \ , 
 \]
 with heat kernel $\Green(x,t) =\FourierInv[\exp (t\ \RFsymbol(\cdot))](x)$.
In particular, $\Green(x,t)$ is the probability measure of a L\'{e}vy strictly $\RFalpha$-stable distribution.
\end{proposition}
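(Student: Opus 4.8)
The plan is to construct the semigroup by hand on the Fourier side and to read off all of its properties from the classical theory of stable laws. Write $\mu_t$ for the candidate time-$t$ law, so that $\Fourier[\mu_t](\FVar)=\euler^{t\RFsymbol(\FVar)}$. On the Feller--Takayasu diamond one has $|\RFtheta|\le1$, hence $\mathrm{Re}\,\RFsymbol(\FVar)=-|\FVar|^{\RFalpha}\cos(\RFtheta\pi/2)\le0$ and $|\euler^{t\RFsymbol(\FVar)}|\le1$ for all $t\ge0$ and $\FVar\in\R$. The first and main step is to invoke the L\'evy--Khintchine representation together with the classical description of strictly $\RFalpha$-stable distributions (see e.g.~\cite{Sato:1999,Mainardi+Luchko+Pagnini:2001}): \emph{precisely} for $(\RFalpha,\RFtheta)\in\RFall$ the map $\FVar\mapsto\euler^{t\RFsymbol(\FVar)}$ is the characteristic function of a strictly $\RFalpha$-stable probability measure $\mu_t$ (with power-law L\'evy measure when $\RFalpha<2$, Gaussian when $\RFalpha=2$). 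Such a law is absolutely continuous with a smooth density $\Green(\cdot,t)$, except at the two corners $(\RFalpha,\RFtheta)=(1,\pm1)$, where $\mu_t=\delta_{\mp t}$ and $S_t$ is a pure translation; in every case $\mu_t$ has total mass one and $\Green(x,t)=\FourierInv[\euler^{t\RFsymbol(\cdot)}](x)$ in the sense of measures. I expect this step to be the crux: the restriction $(\RFalpha,\RFtheta)\in\RFall$ is exactly what makes $\euler^{t\RFsymbol}$ a characteristic function, and I would cite the stable-law literature rather than reprove it.

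\emph{Contraction and semigroup law.} Set $S_tu_0:=\Green(\cdot,t)\ast u_0=\mu_t\ast u_0$ for $t>0$ and $S_0:=\mathrm{Id}$. Minkowski's integral inequality with $\|\mu_t\|=1$ gives $\|S_tu_0\|_{L^p}\le\|u_0\|_{L^p}$, so each $S_t$ is a linear contraction on $L^p(\R)$ for all $1\le p\le\infty$. The identity $S_{t+s}=S_tS_s$ is equivalent to $\mu_{t+s}=\mu_t\ast\mu_s$, which follows from $\euler^{(t+s)\RFsymbol}=\euler^{t\RFsymbol}\,\euler^{s\RFsymbol}$ and the convolution theorem; i.e.\ $(\mu_t)_{t\ge0}$ is a convolution semigroup. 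For strong continuity, the homogeneity $\RFsymbol(\lambda\FVar)=\lambda^{\RFalpha}\RFsymbol(\FVar)$, $\lambda>0$, yields the self-similarity $\mu_t=\mathrm{law}(t^{1/\RFalpha}X)$ with $X\sim\mu_1$, so that $\mu_t(\{|y|>\delta\})=\mathbb P(|X|>\delta\,t^{-1/\RFalpha})\to0$ as $t\to0^+$ for every $\delta>0$: the measures $\mu_t$ concentrate at the origin. Combined with the $L^p$-continuity of translations ($1\le p<\infty$) through
\[
  \|S_tu_0-u_0\|_{L^p}\le\int_{\R}\|u_0(\cdot-y)-u_0\|_{L^p}\,\mu_t(\dd[y])
\]
and a split of the integral at $|y|=\delta$, this gives $\|S_tu_0-u_0\|_{L^p}\to0$ as $t\to0^+$ for every $u_0\in L^p(\R)$; with the semigroup law and uniform contractivity, $(S_t)_{t\ge0}$ is strongly continuous on $[0,\infty)$.

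\emph{The generator is $\RieszFeller$.} For $f\in\SchwartzTF(\R)$ the difference quotient $t^{-1}(S_tf-f)$ has Fourier transform $t^{-1}\big(\euler^{t\RFsymbol(\FVar)}-1\big)\Fourier[f](\FVar)$; from $|\euler^{t\RFsymbol(\FVar)}-1|\le t\,|\RFsymbol(\FVar)|$ (valid since $\mathrm{Re}\,\RFsymbol\le0$) and $|\RFsymbol(\FVar)|=|\FVar|^{\RFalpha}$, which puts $\RFsymbol\,\Fourier[f]$ in $L^1(\R)\cap L^2(\R)$, dominated convergence gives $t^{-1}(S_tf-f)\to\FourierInv[\RFsymbol\,\Fourier[f]]$ in $L^2(\R)$ as $t\to0^+$. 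By the defining relation~\eqref{eq:RF:transform} this limit is $\RieszFeller f$, so the generator of $(S_t)_{t\ge0}$ agrees with $\RieszFeller$ on the dense subspace $\SchwartzTF(\R)$; since $\RieszFeller$ is a L\'evy operator of the form~\eqref{op:Levy} and $\SchwartzTF(\R)$ (equivalently $\Cc^\infty(\R)$) is a core for its $L^p$-realization~\cite{Sato:1999,Applebaum:2009}, the generator is exactly $\RieszFeller$. The only genuine nuisance for $p\ne2$ is matching the maximal $L^p$-domain of the Fourier multiplier with the domain of the generator, which is precisely what the core argument supplies.
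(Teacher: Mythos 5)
The paper does not actually prove Proposition~\ref{prop:MarkovSG}: it is stated as background, with the burden implicitly carried by the references on L\'evy processes and stable laws (\cite{Sato:1999,Applebaum:2009,Jacob:2001,Mainardi+Luchko+Pagnini:2001}). Your argument is precisely the standard construction those references contain, and it is essentially sound: the identification of $\euler^{t\RFsymbol}$ as the characteristic function of a strictly $\RFalpha$-stable law for $(\RFalpha,\RFtheta)\in\RFall$ (in the Feller parametrization, consistent with the paper's probabilists' convention $\Fourier[g](\FVar)=\int\euler^{\ii\FVar x}g(x)\dd$) is indeed the crux and is correctly delegated to the literature; contractivity via Young's inequality with $\|\Green(\cdot,t)\|_{L^1}=1$, the convolution semigroup law, and strong continuity via the self-similarity $\Green(x,t)=t^{-1/\RFalpha}\Green(xt^{-1/\RFalpha},1)$ plus $L^p$-continuity of translations (this is exactly where $p<\infty$ enters) are all standard and correct, and you handle the trivial corners $(1,\pm1)$ appropriately.

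The one place that deserves more care is the generator identification for $p\neq 2$. Your dominated-convergence computation shows $t^{-1}(S_tf-f)\to\FourierInv[\RFsymbol\,\Fourier[f]]$ in $L^2(\R)$ for $f\in\SchwartzTF(\R)$, but the generator of the $L^p$-semigroup is defined by convergence of the difference quotient in the $L^p$ norm, and $L^2$-convergence does not by itself yield this. The standard fix (e.g.\ \cite[Thm.~31.5]{Sato:1999}) is to prove convergence of $t^{-1}(S_tf-f)$ to the L\'evy--Khintchine integro-differential expression~\eqref{op:Levy} directly in $C_0$ and $L^p$ for $f\in\Cc^\infty(\R)$, and then observe that this expression coincides with $\FourierInv[\RFsymbol\,\Fourier[f]]$ (which for the Riesz--Feller symbol is the content of Theorem~\ref{thm:RieszFeller:extension} together with~\eqref{eq:LevyMeasure1D}); your closing appeal to the core theorem handles the domain question but not this convergence question, so the two issues should be separated. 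This is a presentational gap in a proof whose architecture is otherwise the right one.
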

For $(\RFalpha,\RFtheta)\in\{(1,1),(1,-1)\}$, 
 the probability measure $\Green$ is a delta distribution,
 e.g. $G^1_1(x,t) = \delta_{x+t}$ and $G^1_{-1}(x,t) = \delta_{x-t}$,
 and is called trivial~\cite[Definition 13.6]{Sato:1999}.
However, we are interested in non-trivial probability measures $\Green$ for 
 \[ (\RFalpha,\RFtheta)\in\RFnontrivial:=\set{(\RFalpha,\RFtheta)\in\RFall}{ \abs{\RFtheta}<1} \ , \]
such that $\RFall = \RFnontrivial \cup \{(1,1), \ (1,-1)\}$.
Note, nonlocal Riesz-Feller $\RieszFeller$ operators are those with parameters
 \[ (\RFalpha,\RFtheta)\in\RFnonlocal:=\set{(\RFalpha,\RFtheta)\in\RFall}{0<\RFalpha< 2 \ , \quad  \abs{\RFtheta}<1}, \]
 such that $\RFnontrivial=\RFnonlocal \cup \{(2,0)\}$.
\begin{proposition}[{\cite[Lemma 2.1]{Achleitner+Kuehn:2015}}] \label{prop:SSPM} 
For $(\RFalpha,\RFtheta)\in\RFnontrivial$ the probability measure $\Green$ is absolutely continuous with respect to the Lebesgue measure
 and possesses a probability density which will be denoted again by $\Green$.
For all $(x,t)\in \R \times (0,\infty)$ 
 the following properties hold;
  \begin{enumerate}[label=(\alph*)] 
    \item \label{K:SFDE:prop0} $\Green(x,t)\geq 0$. If $\RFtheta\neq \pm \RFalpha$ then $\Green(x,t)> 0$;
    \item \label{K:SFDE:prop2} $\|\Green(\cdot,t)\|_{L^1(\R)}=1$;
    \item \label{K:SFDE:prop1} $\Green(x,t)=t^{-1/\RFalpha} \Green (x t^{-1/\RFalpha},1)$;
    \item \label{K:SFDE:prop3} $\Green(\cdot,s)\ast \Green(\cdot,t) = \Green(\cdot,s+t)$ for all $s,t\in(0,\infty)$;
    \item \label{K:SFDE:prop5} $\Green\in C^\infty_0(\R\times (0,\infty))$.
  \end{enumerate}
\end{proposition}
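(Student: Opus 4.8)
Everything except the strict positivity asserted in~\ref{K:SFDE:prop0} reduces to one elementary observation about the Fourier symbol. Write $\hat g_t(\FVar):=\euler^{t\RFsymbol(\FVar)}$, which by Proposition~\ref{prop:MarkovSG} is the characteristic function of the probability measure $\Green(\cdot,t)$. Since $\operatorname{Re}\RFsymbol(\FVar)=-|\FVar|^{\RFalpha}\cos(\RFtheta\pi/2)$ and $|\RFtheta|<1$ on $\RFnontrivial$, the constant $c_{\RFtheta}:=\cos(\RFtheta\pi/2)$ is strictly positive, whence $|\hat g_t(\FVar)|=\euler^{-t\,c_{\RFtheta}|\FVar|^{\RFalpha}}$. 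Consequently, for every $t>0$ the functions $\FVar\mapsto\FVar^{n}\,\hat g_t(\FVar)$ with $n\in\N$ all lie in $L^1(\R)$, with an $L^1$-bound that is uniform for $t$ in compact subsets of $(0,\infty)$; moreover $\partial_t\hat g_t=\RFsymbol(\cdot)\,\hat g_t$ and its higher $t$-derivatives only insert further polynomially bounded factors. By the Fourier inversion theorem for probability measures with integrable characteristic function, $\Green(\cdot,t)$ is therefore absolutely continuous with bounded continuous density, which we denote again by $\Green(\cdot,t)$; and one may differentiate $\Green(x,t)=\tfrac{1}{2\pi}\integral{\R}{\euler^{-\ii\FVar x}\hat g_t(\FVar)}{\FVar}$ arbitrarily often under the integral sign in both $x$ and $t$, the resulting integrals being continuous in $(x,t)$ and, by the Riemann--Lebesgue lemma, vanishing as $|x|\to\infty$. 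This is exactly property~\ref{K:SFDE:prop5}.

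The identities~\ref{K:SFDE:prop2}, \ref{K:SFDE:prop1} and~\ref{K:SFDE:prop3} are then pure Fourier bookkeeping. Evaluating the transform at $\FVar=0$ gives $\integral{\R}{\Green(x,t)}{x}=\hat g_t(0)=\euler^{t\RFsymbol(0)}=1$, which together with nonnegativity yields~\ref{K:SFDE:prop2}. For~\ref{K:SFDE:prop1} one uses that $\RFsymbol$ is positively homogeneous of degree $\RFalpha$, so that $t\,\RFsymbol(\FVar)=\RFsymbol(t^{1/\RFalpha}\FVar)$; the substitution $\FVar\mapsto t^{-1/\RFalpha}\FVar$ in the inversion integral then produces $\Green(x,t)=t^{-1/\RFalpha}\Green(t^{-1/\RFalpha}x,1)$. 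For~\ref{K:SFDE:prop3}, both $\Green(\cdot,s)\ast\Green(\cdot,t)$ and $\Green(\cdot,s+t)$ are $L^1$ functions with Fourier transform $\hat g_s\,\hat g_t=\euler^{(s+t)\RFsymbol}=\hat g_{s+t}$, hence they coincide by injectivity of $\Fourier$ on $L^1(\R)$; this is also the semigroup identity $S_{s+t}=S_s S_t$ of Proposition~\ref{prop:MarkovSG}.

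It remains to prove~\ref{K:SFDE:prop0}. Nonnegativity is immediate, since $\Green(\cdot,t)$ is a probability density by Proposition~\ref{prop:MarkovSG}, hence $\geq 0$ almost everywhere and therefore, by the continuity established above, everywhere. The strict inequality for $\RFtheta\neq\pm\RFalpha$ is the one point that goes beyond Fourier calculus, and I would derive it from the structure theory of strictly $\RFalpha$-stable laws: inside the Feller--Takayasu diamond the extremal values $\RFtheta=\pm\RFalpha$ occur only when $0<\RFalpha<1$, and there they correspond precisely to the one-sided stable laws --- for instance $\RFtheta=-\RFalpha$, the case $-\Caputo{+}=\RieszFellerII{\RFalpha}{-\RFalpha}$, gives the positive $\RFalpha$-stable law supported on $[0,\infty)$ --- whereas every other $(\RFalpha,\RFtheta)\in\RFnontrivial$ yields a stable law whose support is all of $\R$, and a stable density is strictly positive on the interior of its support (see~\cite{Sato:1999}). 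Combining this with~\ref{K:SFDE:prop1}, which transports positivity at $t=1$ to every $t>0$, gives $\Green(x,t)>0$ for all $x\in\R$. The main obstacle is precisely this support/positivity input: it is invisible at the level of the symbol $\RFsymbol$ and must be imported from the classification of stable distributions, whereas all the other assertions rest only on the integrability of $\euler^{t\RFsymbol(\cdot)}$ together with its polynomial multiples.
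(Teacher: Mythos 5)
The paper offers no proof of this proposition---it is quoted directly from Lemma~2.1 of \cite{Achleitner+Kuehn:2015}---so there is nothing in-text to compare against; your Fourier-analytic argument (integrability of $\euler^{t\RFsymbol(\cdot)}$ and all its polynomial multiples because $\operatorname{Re}\RFsymbol(\FVar)=-|\FVar|^{\RFalpha}\cos(\RFtheta\pi/2)<0$ for $|\RFtheta|<1$, then inversion, differentiation under the integral, homogeneity of the symbol for the scaling law, and multiplicativity for the Chapman--Kolmogorov identity) is the standard one used there and is correct. The one genuinely non-Fourier input, strict positivity for $\RFtheta\neq\pm\RFalpha$, is rightly traced to the support theorem for strictly stable laws (the support is all of $\R$ except for the one-sided laws, which within $\RFnontrivial$ occur exactly at $\RFtheta=\pm\RFalpha$ with $0<\RFalpha<1$) combined with positivity of a stable density on the interior of its support, both available in \cite{Sato:1999}; you correctly identify this as the only step that cannot be read off the symbol.
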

The L\'evy measure~$\nu$ of a Riesz-Feller operator $\RieszFeller$ 
 with $(\RFalpha,\RFtheta)\in\RFnonlocal$ is absolutely continuous with respect to Lebesgue measure and satisfies 
\begin{equation} \label{eq:LevyMeasure1D}
\nu(\dd[y]) = 
 \begin{cases}
  c_-(\RFtheta) y^{-1-\RFalpha}\ \dd[y] & \text{on } (0,\infty)\ , \\
  c_+(\RFtheta) |y|^{-1-\RFalpha}\ \dd[y] & \text{on } (-\infty,0)\ ,
 \end{cases}
\end{equation}
with $c_\pm(\RFtheta)= \Gamma(1+\RFalpha) \sin ( (\RFalpha \pm \RFtheta) \pi/2 ) /\pi$,
 see~\cite{Mainardi+Luchko+Pagnini:2001,Uchaikin:2013:bothVolumes}.

To study a TWP for evolution equations involving Riesz-Feller operators, 
 it is necessary to extend the Riesz-Feller operators to $C^2_b(\R)$.
Their singular integral representations~\eqref{op:Levy} may be used to accomplish this task.
\begin{theorem}[\cite{Achleitner+Kuehn:2015}] \label{thm:RieszFeller:extension}
If $(\RFalpha,\RFtheta)\in\RFnonlocal$ with $\RFalpha\ne 1$,
 then for all $f\in\SchwartzTF(\R)$ and $x\in\R$ 
\begin{align}
\RieszFeller f(x) = &\frac{c_+(\RFtheta) - c_-(\RFtheta)}{1-\RFalpha} f'(x) \nonumber \\
  &+ c_+(\RFtheta) \integrall{0}{\infty}{ \frac{f(x+y)-f(x)-f'(x)\,y 1_{(-1,1)}(y)}{y^{1+\RFalpha}} }{y} \label{eq:RieszFeller1} \\
  &+ c_-(\RFtheta) \integrall{0}{\infty}{ \frac{f(x-y)-f(x)+f'(x)\,y 1_{(-1,1)}(y)}{y^{1+\RFalpha}} }{y} \nonumber
\end{align}
with $c_\pm(\RFtheta)= \Gamma(1+\RFalpha) \sin ( (\RFalpha \pm \RFtheta) \pi/2 ) /\pi$.
Alternative representations are
\begin{itemize}
\item If $0<\RFalpha<1$, then 
 \[ 
 \RieszFeller f(x) = c_+(\RFtheta) \integrall{0}{\infty}{ \frac{f(x+y)-f(x)}{y^{1+\RFalpha}} }{y}
  + c_-(\RFtheta) \integrall{0}{\infty}{ \frac{f(x-y)-f(x)}{y^{1+\RFalpha}} }{y} \,.
 \] 
\item If $1<\RFalpha<2$, then 
 \begin{multline} \label{eq:RieszFeller1b}
 \RieszFeller f(x) = c_+(\RFtheta) \integrall{0}{\infty}{ \frac{f(x+y)-f(x)-f'(x)\,y}{y^{1+\RFalpha}} }{y} \\
  + c_-(\RFtheta) \integrall{0}{\infty}{ \frac{f(x-y)-f(x)+f'(x)\,y}{y^{1+\RFalpha}} }{y} \,.
 \end{multline}
\end{itemize}
\end{theorem}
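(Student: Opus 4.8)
The plan is to reduce the identity to the computation of a Fourier multiplier. Write $Tf(x)$ for the right-hand side of~\eqref{eq:RieszFeller1}. First I would check that $Tf$ is a well-defined bounded continuous function for every $f\in\SchwartzTF(\R)$: a second-order Taylor expansion of $f$ shows that the integrands are $O(y^{1-\RFalpha})$ as $y\to0^+$, which is integrable since $\RFalpha<2$, while for $y>1$ the correction term $f'(x)\,y\,1_{(-1,1)}(y)$ is absent and the integrands are $O(y^{-1-\RFalpha})$; dominated convergence then gives continuity and boundedness in $x$. Hence $Tf$ is a tempered distribution, and it suffices to prove $\Fourier[Tf]=\RFsymbol\,\Fourier[f]=\Fourier[\RieszFeller f]$, after which Fourier inversion yields $Tf=\RieszFeller f$ on $\SchwartzTF(\R)$. (Morally this is just the L\'evy--Khintchine correspondence applied to the L\'evy measure~\eqref{eq:LevyMeasure1D}, with the drift chosen so that no Brownian part appears.)

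Next I would compute $\Fourier[Tf]$. Using $\Fourier[f(\cdot+y)](\FVar)=\euler^{-\ii\FVar y}\Fourier[f](\FVar)$ and $\Fourier[f'](\FVar)=-\ii\FVar\,\Fourier[f](\FVar)$, and interchanging the Fourier integral with the $y$-integrals (justified by Fubini using the bounds above, or by pairing with a test function), one gets $\Fourier[Tf](\FVar)=m(\FVar)\,\Fourier[f](\FVar)$ with
\[
 m(\FVar)=-\ii\FVar\,\frac{c_+(\RFtheta)-c_-(\RFtheta)}{1-\RFalpha}+c_+(\RFtheta)\,J(\FVar)+c_-(\RFtheta)\,\overline{J(\FVar)}\ ,\qquad
 J(\FVar):=\integrall{0}{\infty}{\frac{\euler^{-\ii\FVar y}-1+\ii\FVar y\,1_{(-1,1)}(y)}{y^{1+\RFalpha}}}{y}\ .
\]
By the substitution $y\mapsto y/\abs{\FVar}$ it is enough to evaluate $J$ at $\FVar=1$, and the case $\FVar<0$ follows from $m(-\FVar)=\overline{m(\FVar)}$ since $c_\pm(\RFtheta)\in\R$. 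To evaluate $J(1)$ I would split off the truncation term $\ii y\,1_{(-1,1)}(y)$ (whose integral is an explicit multiple of $\FVar$, finite because $\RFalpha\neq 1$) and rotate the contour of the remaining integral from the positive real axis onto the negative imaginary axis; this is where $0<\RFalpha<2$ is used, as it makes the circular arcs near $0$ and near $\infty$ contribute nothing. The rotated integral reduces to the elementary gamma identities $\integrall{0}{\infty}{(\euler^{-s}-1)\,s^{-1-\RFalpha}}{s}=\Gamma(-\RFalpha)$ for $0<\RFalpha<1$ and $\integrall{0}{\infty}{(\euler^{-s}-1+s)\,s^{-1-\RFalpha}}{s}=\Gamma(-\RFalpha)$ for $1<\RFalpha<2$. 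The drift coefficient $-\ii\FVar(c_+-c_-)/(1-\RFalpha)$ is designed exactly to cancel the explicit multiples of $\FVar$ produced by $c_+J+c_-\overline J$, leaving $m(\FVar)=\FVar^{\RFalpha}\,\Gamma(-\RFalpha)\,\big(c_+(\RFtheta)\euler^{\ii\RFalpha\pi/2}+c_-(\RFtheta)\euler^{-\ii\RFalpha\pi/2}\big)$ for $\FVar>0$.

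The last step is trigonometric bookkeeping: inserting $c_\pm(\RFtheta)=\Gamma(1+\RFalpha)\sin((\RFalpha\pm\RFtheta)\pi/2)/\pi$ and applying a product-to-sum identity gives $c_+(\RFtheta)\euler^{\ii\RFalpha\pi/2}+c_-(\RFtheta)\euler^{-\ii\RFalpha\pi/2}=\Gamma(1+\RFalpha)\sin(\RFalpha\pi)\,\euler^{\ii\RFtheta\pi/2}/\pi$, and then the reflection formula in the form $\Gamma(-\RFalpha)\,\Gamma(1+\RFalpha)=-\pi/\sin(\RFalpha\pi)$ collapses everything to $m(\FVar)=-\FVar^{\RFalpha}\euler^{\ii\RFtheta\pi/2}=\RFsymbol(\FVar)$ for $\FVar>0$, hence $m=\RFsymbol$ on all of $\R$; this proves~\eqref{eq:RieszFeller1}. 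The two alternative representations then require no new analysis: for $0<\RFalpha<1$ the truncation terms are individually integrable with $\integrall{0}{1}{y^{-\RFalpha}}{y}=1/(1-\RFalpha)$, and reabsorbing them into the integrands cancels the drift term $\tfrac{c_+(\RFtheta)-c_-(\RFtheta)}{1-\RFalpha}f'(x)$ exactly; for $1<\RFalpha<2$ one instead uses $\integrall{1}{\infty}{y^{-\RFalpha}}{y}=1/(\RFalpha-1)$ to upgrade $1_{(-1,1)}(y)$ to $1$ in the numerators, and again the drift term disappears. I expect the contour rotation together with the final trigonometric simplification (recognizing the symbol $\RFsymbol$) to be the only genuinely delicate points; the hypothesis $\RFalpha\neq1$ is essential throughout, since the coefficient $(c_+(\RFtheta)-c_-(\RFtheta))/(1-\RFalpha)$ --- and the whole bookkeeping around it --- degenerates at $\RFalpha=1$.
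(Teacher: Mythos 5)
The paper does not actually prove Theorem~\ref{thm:RieszFeller:extension}; it is imported verbatim from \cite{Achleitner+Kuehn:2015}, so there is no in-paper argument to compare against. Your proposal is correct and follows what is essentially the standard route (and the one used in the cited source): verify that the singular integral on the right-hand side is a Fourier multiplier operator, evaluate the multiplier via the homogeneous Gamma-integrals $\int_0^\infty(\euler^{-s}-1)s^{-1-\RFalpha}\dd[s]=\Gamma(-\RFalpha)$ resp.\ $\int_0^\infty(\euler^{-s}-1+s)s^{-1-\RFalpha}\dd[s]=\Gamma(-\RFalpha)$ after a contour rotation, and collapse the trigonometric factors with the reflection formula to recover $\RFsymbol$. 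I checked the bookkeeping: the drift coefficient $(c_+-c_-)/(1-\RFalpha)$ does cancel the linear-in-$\FVar$ contributions of the truncation in both regimes, and the two alternative representations follow exactly as you describe.

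One small inaccuracy in the ordering of steps: the scaling $y\mapsto y/\abs{\FVar}$ does not commute with the truncation $1_{(-1,1)}(y)$ (it moves the cutoff to $\abs{\FVar}$), and for $1<\RFalpha<2$ you cannot ``split off'' the truncation term since $\int_0^1 y^{-\RFalpha}\dd[y]$ diverges --- there you must \emph{complete} $1_{(-1,1)}$ to $1$ using $\int_1^\infty y^{-\RFalpha}\dd[y]=1/(\RFalpha-1)$ \emph{before} scaling the homogeneous remainder $\int_0^\infty(\euler^{-\ii\FVar y}-1+\ii\FVar y)y^{-1-\RFalpha}\dd[y]$. You clearly know this (it is exactly how you derive \eqref{eq:RieszFeller1b}), so it is a presentational slip rather than a gap; just perform the split/completion first and scale second, separately in the two ranges of $\RFalpha$.
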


These representations allow to extend Riesz-Feller operators~$\RieszFeller$ to $C^2_b(\R)$ such that $\RieszFeller C^2_b(\R)\subset C_b(\R)$.
For example, one can show
\begin{proposition}[{\cite[Proposition 2.4]{Achleitner+Kuehn:2015}}] \label{prop:RieszFeller:estimate}
For $(\RFalpha,\RFtheta)\in\RFall$ with $1<\RFalpha<2$,
 the integral representation~\eqref{eq:RieszFeller1b} of $\RieszFeller$ is well-defined for functions $f\in C^2_b(\R)$ with
 \begin{equation} \label{eq:estimate:RieszFeller}
  \sup_{x\in\R} |\RieszFeller f(x)|
      \leq \cK \|{f''}\|_{C_b(\R)} \frac{M^{2-\RFalpha}}{2-\RFalpha}
      + 4 \cK \|f'\|_{C_b(\R)} \frac{M^{1-\RFalpha}}{\RFalpha-1} < \infty
 \end{equation}
 for some positive constants $M$
 and $\cK=\frac{\Gamma(1+\RFalpha)}{\pi} \abs{ \sin ( (\RFalpha+\RFtheta) \frac{\pi}{2} ) + \sin( (\RFalpha-\RFtheta) \frac{\pi}{2} ) }$.

\end{proposition}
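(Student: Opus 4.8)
The plan is a routine cutoff-and-split estimate applied to the singular-integral representation~\eqref{eq:RieszFeller1b}. Fix $x\in\R$ and a cutoff $M>0$, and split each of the two half-line integrals into a near part $\int_0^M$ and a far part $\int_M^\infty$. On the near part I would use Taylor's theorem with Lagrange remainder: since $f\in C_b^2(\R)$, one has $\abs{f(x\pm y)-f(x)\mp f'(x)\,y}\leq \tfrac12\|f''\|_{\Cb(\R)}\,y^2$ for $y>0$, so each integrand is dominated on $(0,M)$ by $\tfrac12\|f''\|_{\Cb(\R)}\,y^{1-\RFalpha}$, which is integrable there because $1<\RFalpha<2$ gives $1-\RFalpha\in(-1,0)$; the contribution is $\tfrac12\|f''\|_{\Cb(\R)}\,M^{2-\RFalpha}/(2-\RFalpha)$. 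On the far part I would use the cruder bound $\abs{f(x\pm y)-f(x)}\leq\|f'\|_{\Cb(\R)}\,y$ from the mean value theorem together with $\abs{f'(x)\,y}\leq\|f'\|_{\Cb(\R)}\,y$, so each integrand is dominated on $(M,\infty)$ by $2\|f'\|_{\Cb(\R)}\,y^{-\RFalpha}$, which is integrable there since $\RFalpha>1$; the contribution is $2\|f'\|_{\Cb(\R)}\,M^{1-\RFalpha}/(\RFalpha-1)$. Finiteness of both bounds shows that the two integrals in~\eqref{eq:RieszFeller1b} converge absolutely, which is exactly the claim that $\RieszFeller f$ is well-defined on $C_b^2(\R)$ (and agrees with the operator of Theorem~\ref{thm:RieszFeller:extension} on $\SchwartzTF(\R)$).

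It then remains to assemble the four pieces. The key bookkeeping observation is that for $1<\RFalpha<2$ and $\abs{\RFtheta}\leq 2-\RFalpha$ both numbers $(\RFalpha\pm\RFtheta)\pi/2$ lie in $(0,\pi]$, hence $c_\pm(\RFtheta)\geq 0$ and $c_+(\RFtheta)+c_-(\RFtheta)=\cK$; adding the $c_+$- and $c_-$-integrals and applying the triangle inequality together with the two bounds above yields a bound of the form $\cK\big(\tfrac12\|f''\|_{\Cb(\R)}M^{2-\RFalpha}/(2-\RFalpha)+2\|f'\|_{\Cb(\R)}M^{1-\RFalpha}/(\RFalpha-1)\big)$. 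This is independent of $x$ and immediately implies~\eqref{eq:estimate:RieszFeller} (in fact with slightly better constants). Taking the supremum over $x\in\R$ completes the argument; note the bound holds for every $M>0$, and since the two powers of $M$ move in opposite directions one could optimize over $M$, though this is not needed.

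I do not expect a real obstacle here. The only points that require care are: using the full $C^2$-regularity, since the Lagrange remainder bound on the near part would fail for merely $C^1$ functions; tracking the integrability exponents, namely $1-\RFalpha>-1$ at the origin and $-\RFalpha<-1$ at infinity, which is precisely where the hypothesis $1<\RFalpha<2$ is used; and the sign/range check on $c_\pm(\RFtheta)$ that allows the two Riesz-Feller coefficients to be combined cleanly into $\cK$.
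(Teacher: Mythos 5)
Your proof is correct and follows the same route as the cited source: split each half-line integral at the cutoff $M$, control the near part by the second-order Taylor remainder ($y^{1-\RFalpha}$ is integrable at $0$ since $\RFalpha<2$) and the far part by the mean value theorem ($y^{-\RFalpha}$ is integrable at $\infty$ since $\RFalpha>1$), then combine the two half-line integrals using $c_\pm(\RFtheta)\geq 0$ and $c_+(\RFtheta)+c_-(\RFtheta)=\cK$ on the Feller--Takayasu diamond. Your constants $\tfrac12$ and $2$ are in fact sharper than the stated $1$ and $4$, so the claimed estimate~\eqref{eq:estimate:RieszFeller} follows a fortiori.
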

Estimate~\eqref{eq:estimate:RieszFeller} is a key estimate,
 which is used to adapt Chen's approach~\cite{Chen:1997} to the TWP for nonlocal reaction-diffusion equations with Riesz-Feller operators~\cite{Achleitner+Kuehn:2015}. 
\medskip

\section{TWP for classical evolution equations} \label{sec:local}

In this section we review the importance of the TWP for reaction-diffusion equations and scalar conservation laws with higher-order regularizations, respectively.
\subsection{reaction-diffusion equations}
A scalar reaction-diffusion equations is a partial differential equation
 \begin{equation} \label{RD:r:local}
   \sdiff{u}{t} = r(u) + \sigma \sdifff{u}{x}{2} \ , \quad t>0 \ ,\quad x\in\R \ ,
 \end{equation}
 for some positive constant~$\sigma>0$, as well as a nonlinear function $r:\R\to\R$ and second-order derivative $\sdifff{u}{x}{2}$
 modeling reaction and diffusion, respectively.
The TWP for given endstates $\upm$
 is to study the existence of a TWS~$(\profile,\RDspeed)$ for~\eqref{RD:r:local} in the sense of Definition~\ref{def:TWS}.
If the profile $\profile\in C^2(\R)$ is bounded, then it satisfies $\lim_{\xi\to\pm\infty} \profile^{(n)}(\xi) =0$ for $n=1,2$.
A TWS~$\TWS$ satisfies the TWE
 \begin{equation}\label{RD:TWE:local}
  -c \profile' = r(\profile) + \sigma \profile'' \ , \quad \xi\in\R \ . 
 \end{equation}

\emph{phase plane analysis.}
A traveling wave profile~$\profile$ is a heteroclinic orbit of the TWE~\eqref{RD:TWE:local} connecting the endstates~$\upm$.
To identify necessary conditions on the existence of TWS,
 TWE~\eqref{RD:TWE:local} is written as a system of first-order ODEs for $u$, $v:=u'$:
 \begin{equation} \label{RD:TWE:local:system}
  \Diff{}{\xi} \binom{u}{v} = \binom{v}{(-r(u) -\RDspeed v)/\sigma} =: F(u,v) \ , \quad \xi\in\R \ . 
 \end{equation}
First, an endstate $(u_s,v_s)$ of a heteroclinic orbit has to be a stationary state of $F$, i.e. $F(u_s,v_s)=0$,
 which implies $v_s\equiv 0$ and $r(u_s) =0$.
Second, $(\um,0)$ has to be an unstable stationary state of~\eqref{RD:TWE:local:system}
 and $(\up,0)$ either a saddle or a stable node of~\eqref{RD:TWE:local:system}.
As long as a stationary state $(u_s,v_s)$ is hyperbolic,
 i.e. the linearization of $F$ at $(u_s,v_s)$ has only eigenvalues $\lambda$ with non-zero real part,
 the stability of $(u_s,v_s)$ is determined by these eigenvalues.
The linearization of $F$ at $(u_s,v_s)$ is 
 \begin{equation}
   \DD F(u_s,v_s) = \begin{pmatrix} 0 & 1 \\ -r'(u_s)/\sigma & -\RDspeed/\sigma \end{pmatrix} \ . 
 \end{equation}
Eigenvalues~$\lambda_\pm$ of the Jacobian $\DD F(u_s,v_s)$ satisfy the characteristic equation $\lambda^2 + \lambda\RDspeed/\sigma +r'(u_s)/\sigma=0$.
Moreover, $\lambda_- + \lambda_+ = -\RDspeed/\sigma$ and $\lambda_- \lambda_+ = r'(u_s)/\sigma$.
The eigenvalues~$\lambda_\pm$ of the Jacobian $\DD F(u_s,v_s)$ are
 \begin{equation} \label{RD:DF:EV}
    \lambda_\pm = -\frac{\RDspeed}{2\sigma} \pm \sqrt{ \frac{\RDspeed^2}{4\sigma^2} - \frac{r'(u_s)}{\sigma}} 
                = \frac{-\RDspeed \pm \sqrt{ \RDspeed^2 - 4\sigma r'(u_s)}}{2\sigma} \ .
 \end{equation}
Thus $r'(u_s)<0$ ensures that $(u_s,0)$ is a saddle point,
 i.e. with one positive and one negative eigenvalue.

\emph{balance of potential.} 
The potential $R$ (of the reaction term $r$) is defined as $R(u) :=\integrall{0}{u}{ r(\upsilon)}{\upsilon}$. 
The potentials of the endstates $\upm$ are  called \emph{balanced} if $R(\up)=R(\um)$ and \emph{unbalanced} otherwise.
A formal computation reveals a connection between the sign of $\RDspeed$ and the balance of the potential $R(u)$:
Multiplying TWE~\eqref{RD:TWE:local} with $\profile'$, integrating on $\R$ and using~\eqref{TWS:limits}, yields
\begin{equation} \label{eq:c+R}
   -\RDspeed \|\profile'\|_{L^2}^2
     = \integrall{\um}{\up}{ r(\upsilon)}{\upsilon} = R(\up) -R(\um) \ , 
\end{equation}
since $\integral{\R}{ \profile'' \profile'}{\xi} = 0$ due to~\eqref{TWS:limits}. 
Thus $-\sgn \RDspeed = \sgn (R(\up) -R(\um))$.
In case of a balanced potential the wave speed $\RDspeed$ is zero, hence the TWS is stationary.

\begin{definition} \label{def:reaction:type}
Assume $\um>\up$. A function $r\in C^1(\R)$ with $r(\upm)=0$ is  
\begin{itemize}
 \item \emph{monostable} if $r'(\um)<0$, $r'(\up)>0$ and $r(u)>0$ for $u\in(\up,\um)$. 
 \item \emph{bistable} if $r'(\upm)<0$ and
   \[ \exists \uStar\in(\up,\um) \ : \ r(u) 
	       \begin{cases}
				    <0 & \text{for } u\in (\up,\uStar) \ , \\
						>0 & \text{for } u\in (\uStar,\um) \ .
				 \end{cases}
	 \]
\item \emph{unstable} if $r'(\upm)>0$.
\end{itemize}
\end{definition}
We chose a very narrow definition compared to \cite{Volpert3:1994}.
Moreover, in most applications of reaction-diffusion equations a quantity $u$ models a density of a substance/population. 
In these situations only nonnegative states $\upm$ and functions $u$ are of interest.
\begin{proposition}[{\cite[\S 2.2]{Volpert3:1994}}] \label{prop:RD:TWS:existence}
Assume $\sigma>0$ and $\um>\up$.
\begin{itemize}
\item If $r$ is monostable, then there exists a positive constant $\RDspeed_*$
 such that for all $\RDspeed\geq \RDspeed_*$ there exists a monotone TWS $(\profile,\RDspeed)$ of \eqref{RD:r:local}
 in the sense of Definition~\ref{def:TWS}.
 For $\RDspeed<\RDspeed_*$ no such monotone TWS exists (however oscillatory TWS may exist).
\item If $r$ is bistable, then there exists an (up to translations) unique monotone TWS $(\profile,\RDspeed)$ of \eqref{RD:r:local}
 in the sense of Definition~\ref{def:TWS}.
\item If $r$ is unstable, then there does not exist a monotone TWS $(\profile,\RDspeed)$ of \eqref{RD:r:local}.
\end{itemize}
\end{proposition}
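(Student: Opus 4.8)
The plan is to pass, via the phase-plane reduction set up above, to the existence of a heteroclinic orbit of the planar system~\eqref{RD:TWE:local:system}, and then to treat each reaction type of Definition~\ref{def:reaction:type} by combining the local spectral picture~\eqref{RD:DF:EV} with a shooting argument in the wave speed~$\RDspeed$. Since $\um>\up$, a monotone profile is non-increasing, so $v:=\profile'\le 0$ and the associated orbit of~\eqref{RD:TWE:local:system} lies in the strip $\Sigma:=\{(u,v):\up\le u\le\um,\ v\le0\}$, running from the rest point $(\um,0)$ at $\xi=-\infty$ to $(\up,0)$ at $\xi=+\infty$. On the interior of $\Sigma$ such an orbit is a graph $v=p(u)<0$ with $p(\upm)=0$; inserting $v'=p'(u)p(u)$ into~\eqref{RD:TWE:local:system} shows that $p$ solves the first-order $p$-equation
\[
 \sigma\,p(u)\,p'(u)+\RDspeed\,p(u)+r(u)=0\ ,\qquad u\in(\up,\um)\ ,
\]
subject to the two boundary conditions $p(\up)=p(\um)=0$, and it is this over-determined problem that selects the admissible speeds. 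I shall also use the integrated identity~\eqref{eq:c+R}, $-\RDspeed\|\profile'\|_{L^2}^2=R(\up)-R(\um)$, which fixes $\sgn\RDspeed$ once the sign of $R(\up)-R(\um)$ is known.

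In the \emph{unstable} case $r'(\upm)>0$, so by~\eqref{RD:DF:EV} the eigenvalues of $\DD F$ at each endstate have positive product $r'(u_s)/\sigma$ and sum $-\RDspeed/\sigma$; hence they have equal sign if real, real part $-\RDspeed/(2\sigma)$ if complex, and are purely imaginary if $\RDspeed=0$. A nonconstant orbit can leave $(\um,0)$ as $\xi\to-\infty$ only if that point has an unstable direction, which forces $\RDspeed<0$, yet it can converge to $(\up,0)$ as $\xi\to+\infty$ only if that point has a stable direction, which forces $\RDspeed>0$; these are incompatible, so there is no connecting orbit at all, in particular no monotone TWS.

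In the \emph{bistable} case $r'(\upm)<0$, so both $(\upm,0)$ are saddles with one-dimensional unstable/stable manifolds, and a look at the eigenvectors shows the unstable branch at $(\um,0)$ and the stable branch at $(\up,0)$ enter the interior of $\Sigma$. I would shoot: let $p(\cdot\,;\RDspeed)$ be the solution of the $p$-equation issuing from $u=\um$ along the unstable branch ($p<0$ just below $\um$) and record where it first returns to $0$ or exits $\Sigma$. A comparison principle for the $p$-equation yields strict monotone dependence of $p(\cdot\,;\RDspeed)$ on $\RDspeed$, while~\eqref{eq:c+R} together with linear barriers $p=\lambda(u-u_s)$ near the endpoints forces the unstable branch from $(\um,0)$ to stay above the stable manifold of $(\up,0)$ for $\RDspeed$ large and below it for $\RDspeed$ very negative. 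By continuity there is exactly one $\RDspeed=\RDspeed^{*}$ for which the two manifolds coincide; that orbit is the unique heteroclinic in $\Sigma$, and since a heteroclinic orbit determines its profile up to a shift of $\xi$, the TWS is unique up to translation.

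In the \emph{monostable} case $(\um,0)$ is a saddle while $r'(\up)>0$, so $(\up,0)$ is a node for $\RDspeed\ge\RDspeed_\sharp:=2\sqrt{\sigma r'(\up)}$ and a spiral for $0<\RDspeed<\RDspeed_\sharp$; as $r>0$ on $(\up,\um)$, identity~\eqref{eq:c+R} forces $\RDspeed>0$ for every TWS, and since an orbit in $\Sigma$ cannot cross $\{v=0\}$ in the interior a spiral endpoint is incompatible with monotonicity (an oscillatory TWS may persist). For $\RDspeed\ge\RDspeed_\sharp$ the same sub-/supersolution comparison for the $p$-equation, now with linear barriers at both endpoints, shows the unstable branch from $(\um,0)$ runs monotonically down to $(\up,0)$; by the monotone dependence on $\RDspeed$ the admissible set is then a closed half-line $[\RDspeed_*,\infty)$ with $\RDspeed_*\ge\RDspeed_\sharp>0$, whereas for $\RDspeed<\RDspeed_*$ that branch leaves $\Sigma$ before reaching $(\up,0)$ and no monotone TWS exists. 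The main obstacle is precisely this shooting machinery shared by the last two cases: setting up a comparison principle for $\sigma p p'+\RDspeed p+r=0$, which degenerates exactly at $u=\upm$ where $p$ vanishes, proving the strict monotone dependence on $\RDspeed$, and controlling the behaviour near the degenerate endpoints with explicit linear barriers; the unstable case and the reductions are comparatively routine, and whether $\RDspeed_*=\RDspeed_\sharp$ or $\RDspeed_*>\RDspeed_\sharp$ is model-dependent and is not part of the claim.
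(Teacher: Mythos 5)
The paper does not actually prove this proposition: it is quoted from Volpert--Volpert--Volpert \cite[\S 2.2]{Volpert3:1994}, and only the surrounding phase-plane preliminaries (\eqref{RD:TWE:local:system}, \eqref{RD:DF:EV}, \eqref{eq:c+R}) appear in the text. Your shooting argument is the standard classical route to this result and is consistent with those preliminaries, so as an outline it is sound. Two points deserve care. First, the reduction of a nonconstant monotone orbit to a graph $v=p(u)<0$ should be justified: if $\profile'(\xi_0)=0$ at an interior point then $\xi_0$ is an interior maximum of $\profile'\le 0$, so $\profile''(\xi_0)=0$ and hence $r(\profile(\xi_0))=0$; ODE uniqueness then forces the orbit to be the constant one, so the graph representation is indeed available, and the $\RDspeed=0$ subcase of the unstable scenario (purely imaginary eigenvalues) is closed by observing that a linear center/spiral is incompatible with $v\le 0$ near the rest point, or by the energy identity behind \eqref{eq:c+R}.

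The one genuine overreach is in the monostable case: you claim the linear barriers make the unstable branch of $(\um,0)$ connect to $(\up,0)$ for \emph{every} $\RDspeed\ge\RDspeed_\sharp=2\sqrt{\sigma r'(\up)}$, which would force $\RDspeed_*=\RDspeed_\sharp$. For general monostable $r$ in the sense of Definition~\ref{def:reaction:type} this is false; only $\RDspeed_*\ge\RDspeed_\sharp$ holds, and the inequality can be strict for non-KPP nonlinearities, so a linear barrier at $\up$ cannot work for all $\RDspeed\ge\RDspeed_\sharp$. The statement you actually need --- and which you in effect retreat to in your last sentence --- is that the barriers work for all \emph{sufficiently large} $\RDspeed$ (e.g. $\RDspeed^2\ge 4\sigma\sup_{u\in(\up,\um)} r(u)/(u-\up)$), after which the monotone dependence on $\RDspeed$ upgrades the nonempty admissible set to a closed half-line $[\RDspeed_*,\infty)$, with the spiral obstruction showing $\RDspeed_*\ge\RDspeed_\sharp>0$. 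With that correction the argument is the standard Aronson--Weinberger/Fife--McLeod proof and matches what the cited source does.
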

If a TWS~$(\profile,\RDspeed)$ exists,
 then a closer inspection of the eigenvalues~\eqref{RD:DF:EV} at $(\up,0)$ indicates the geometry of the profile~$\profile$ for large $\xi$:
\[ \RDspeed^2 - 4\sigma r'(\up)
      \begin{cases}
			  \geq 0 & \text{TWS with monotone decreasing profile~$\profile$ for large $\xi$}; \\
	      < 0 & \text{TWS with oscillating profile~$\profile$ for large $\xi$}.
      \end{cases}
\]

\subsection{Korteweg-de Vries-Burgers equation (KdVB)} 
 A generalized KdVB equation is a scalar partial differential equation
  \begin{equation} \label{eq:KdVB:f}
   \sdiff{u}{t} + \sdiff{}{x} f(u) = \epsilon \sdifff{u}{x}{2} + \delta \sdifff{u}{x}{3}, \quad x\in\R, \quad t>0, 
  \end{equation}
  for some flux function $f:\R\to\R$ as well as constants $\epsilon>0$ and $\delta\in\R$.
 The TWP for given endstates $\upm$
  is to study the existence of a TWS~$\TWS$ for~\eqref{eq:KdVB:f} in the sense of Definition~\ref{def:TWS}.
 The importance of the TWP for KdVB equations in the shock wave theory of (scalar) hyperbolic conservation laws is discussed in Section~\ref{sec:ShockWaveTheory}.
 A TWS~$\TWS$ satisfies the TWE
 \begin{equation}\label{KdVB:TWE:local}
  -c \profile' + f'(\profile)\ \profile' = \epsilon \profile'' + \delta \profile''' \ , \quad \xi\in\R \ , 
 \end{equation}
or integrating on $(-\infty,\xi]$ and using~\eqref{TWS:limits},
 \begin{equation}\label{KdVB:TWE:local:integrated}
  h(\profile) := f(\profile) -c \profile -(f(\um) -c \ \um) = \epsilon \profile' + \delta \profile'' \ , \quad \xi\in\R \ . 
 \end{equation}

\emph{connection with reaction-diffusion equation.}
A TWS $u(x,t)=\profile(x-c t)$ of a generalized Korteweg-de Vries-Burgers equation~\eqref{eq:KdVB:f}
satisfies TWE~\eqref{KdVB:TWE:local:integrated}.
Thus $v(x,t)=\profile(x-\epsilon t)$ is a TWS~$(\profile,\epsilon)$ of the reaction-diffusion equation
\begin{equation} \label{eq:RD:h}
 \sdiff{v}{t} = -h(v) + \delta \sdifff{v}{x}{2}\ , \quad x\in\R\ , \quad t>0\ . 
\end{equation}
	
\emph{phase plane analysis.}
Following the analysis of TWE~\eqref{RD:TWE:local}
 for a reaction-diffusion equation~\eqref{RD:r:local} with $r(u)=-h(u)$ and $\sigma=\delta$,
 necessary conditions on the parameters can be identified.
First, a TWE is rewritten as a system of first-order ODEs with vector field~$F$.
Then the condition on stationary states implies that endstates~$\upm$ and wave speed~$c$ have to satisfy
 \begin{equation}\label{cond:up:stationary}
  f(\up)-f(\um) = \speed (\up - \um) \ .
 \end{equation}
This condition is known in shock wave theory as Rankine-Hugoniot condition~\eqref{RH} on the shock triple $\ShockTriple$.
The (nonlinear) stability of hyperbolic stationary states~$(u_s,v_s)$ of~$F$ is determined by
 the eigenvalues
 \begin{equation} \label{KdVB:DF:EV}
    \lambda_\pm
      = -\frac12 \frac\epsilon\delta \pm \frac{\sqrt{ \epsilon^2 + 4 \delta h'(u_s)}}{2 |\delta|} 
 \end{equation}
of the Jacobian $\DD F(u_s,v_s)$.
If $\epsilon,\delta>0$, then $(\up,0)$ is always either a saddle or stable node,
 and $h'(\um)=f'(\um)-\speed>0$ ensures that $(\um,0)$ is unstable.
For example, Lax' entropy condition~\eqref{cond:entropy:Lax}, 
 i.e. $f'(\up)<\speed<f'(\um)$, implies the latter condition.

\medskip \noindent
\textbf{convex flux functions.}
\begin{theorem} \label{thm:KdVB:TWP:f:convex}
Suppose $f\in C^2(\R)$ is a strictly convex function. 
Let $\epsilon$, $\delta$ be positive and
 let $\ShockTriple$ satisfy the Rankine-Hugoniot condition~\eqref{RH} and
 the entropy condition~\eqref{cond:entropy:Lax}, i.e. $\um>\up$.
Then, there exists an (up to translations) unique TWS $(\profile,c)$ of~\eqref{eq:KdVB:f}
 in the sense of Definition~\ref{def:TWS}.
\end{theorem}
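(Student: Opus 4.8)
\emph{Proof strategy.} The plan is to reduce the statement to the existence and uniqueness (modulo translation) of a heteroclinic orbit of the planar travelling–wave system and to produce that orbit by a Lyapunov/LaSalle argument exploiting the strict convexity of $f$. Setting $u=\profile$ and $v=\profile'$, the integrated travelling–wave equation~\eqref{KdVB:TWE:local:integrated} is equivalent to
\[
  u' = v \ , \qquad v' = \tfrac1\delta\bigl(h(u)-\epsilon v\bigr) \ ,
\]
which is also the travelling–wave system of the reaction–diffusion equation~\eqref{eq:RD:h}, and a TWS in the sense of Definition~\ref{def:TWS} is precisely an orbit with $(u,v)(\xi)\to(\um,0)$ as $\xi\to-\infty$ and $(u,v)(\xi)\to(\up,0)$ as $\xi\to+\infty$. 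First I would record the shape of $h$: the Rankine–Hugoniot condition~\eqref{RH} gives $h(\um)=h(\up)=0$; since $h''=f''>0$ the function $h$ is strictly convex, so $h<0$ on $(\up,\um)$ and $h>0$ on $\R\setminus[\up,\um]$; and $h'(\up)=f'(\up)-\speed<0<f'(\um)-\speed=h'(\um)$ by~\eqref{cond:entropy:Lax}. Hence the only equilibria are $(\um,0)$ and $(\up,0)$, and by~\eqref{KdVB:DF:EV} (using $\epsilon,\delta>0$) the point $(\um,0)$ is a hyperbolic saddle while $(\up,0)$ is asymptotically stable.

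Next I would follow the branch of the one–dimensional unstable manifold of $(\um,0)$ that enters $\{u<\um\}$; along it $v<0$ for $\xi$ near $-\infty$. The core of the argument is the Lyapunov function
\[
  E(\xi) := \tfrac\delta2\,v(\xi)^2 - H(u(\xi)) \ , \qquad H(u):=\integrall{\um}{u}{h(s)}{s} \ ,
\]
for which $\Diff{}{\xi}E(\xi) = -\epsilon\,v(\xi)^2 \le 0$. Since $E$ is non-increasing with $E(\xi)\to -H(\um)=0$ as $\xi\to-\infty$ and the orbit is non-constant, $E(\xi)<0$ for every finite $\xi$. Two consequences: (i) the forward orbit never returns to $\{u=\um\}$, because there $E=\tfrac\delta2 v^2\ge0$ unless one is at the equilibrium itself (which a non-constant orbit cannot reach in finite $\xi$), so $u<\um$ throughout; (ii) strict convexity together with $h'(\up)<0$ gives $h(u)\ge -h'(\up)(\up-u)$ for $u<\up$, hence $H(u)\to-\infty$ as $u\to-\infty$, and then $\tfrac\delta2 v^2 = E + H(u)\le H(u)$ forces $u$ to stay bounded below. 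Thus the forward orbit lies in a compact set, and it is global.

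Finally, a bounded forward orbit together with the strict Lyapunov function $E$ lets LaSalle's invariance principle (or planar Poincaré–Bendixson) conclude: the $\omega$–limit set is nonempty, connected and contained in the largest invariant subset of $\{v=0\}$, which is $\{(\up,0),(\um,0)\}$; since $E$ is constant on the $\omega$–limit set with value $\le E(\xi_0)<0=E(\um,0)$ for any finite $\xi_0$, only $(\up,0)$ survives. Hence the branch is a heteroclinic orbit from $(\um,0)$ to $(\up,0)$; setting $\profile:=u$ gives $\profile\in C^2(\R)$ and the limits~\eqref{TWS:limits} (in particular $\profile''=\tfrac1\delta(h(\profile)-\epsilon\profile')\to0$), so this is a TWS of~\eqref{eq:KdVB:f}. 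For uniqueness up to translation, any TWS yields an orbit converging to the saddle $(\um,0)$ as $\xi\to-\infty$, hence lying on its unstable manifold; the other branch has $v>0$ and $u$ strictly increasing for all $\xi$ (wherever $v=0$ on it one has $v'=\tfrac1\delta h(u)>0$, so $v$ cannot return to $0$), hence stays in $\{u\ge\um\}$ and cannot tend to $(\up,0)$ with $\up<\um$; so every TWS lies on the single remaining branch, which is one orbit, i.e. unique modulo $\xi$–translation. I expect the only delicate point to be trapping the forward orbit in a compact subset of $\{u<\um\}$ — precisely the role of $E$ and of the growth $H(u)\to-\infty$; note that no monotonicity of $\profile$ is claimed or needed, and indeed the profile oscillates about $\up$ when $\epsilon^2+4\delta h'(\up)<0$.
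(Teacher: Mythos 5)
Your argument is correct, and it is more self-contained than the one in the paper. The paper's proof makes the same reduction to the planar system for $(\profile,\profile')$ via the associated reaction--diffusion equation~\eqref{eq:RD:h}, classifies $r=-h$ as monostable and strictly concave, observes that $(\um,0)$ is a saddle and $(\up,0)$ a stable node, and then simply asserts that a (possibly oscillatory) connecting orbit exists for every wave speed, leaning on Proposition~\ref{prop:RD:TWS:existence} and standard phase-plane facts. Note, however, that Proposition~\ref{prop:RD:TWS:existence} as stated only guarantees \emph{monotone} fronts for speeds $\RDspeed\geq\RDspeed_*$, so the existence claim for an arbitrary prescribed speed $\epsilon>0$ (with an oscillatory profile when $\epsilon^2+4\delta h'(\up)<0$) is exactly the point left implicit there. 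Your Lyapunov/LaSalle construction --- the energy $E=\tfrac{\delta}{2}v^2-H(u)$ with $E'=-\epsilon v^2$, the trapping of the unstable branch in $\{u<\um\}\cap\{H\geq 0\}$ using the strict convexity of $h$ to force $H(u)\to-\infty$ as $u\to-\infty$, and the elimination of $(\um,0)$ from the $\omega$-limit set because $E<0$ along the orbit --- supplies precisely the missing quantitative step, and your uniqueness argument (any profile must lie on the one-dimensional unstable manifold of the saddle, and the other branch escapes monotonically into $\{u>\um\}$) is the standard one the paper omits. The only cosmetic caveat is that a profile $\profile\in C^2(\R)$ satisfies the third-order TWE~\eqref{KdVB:TWE:local} only in an integrated sense, so one should take the integrated equation~\eqref{KdVB:TWE:local:integrated} as the definition of the orbit, which is what both you and the paper implicitly do.
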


\begin{proof}
 We consider the associated reaction-diffusion equation~\eqref{eq:RD:h},
 i.e. $\sdiff{u}{t} = r(u) +\delta\sdifff{u}{x}{2}$ with $r(u)=-h(u)$.
 Due to~\eqref{RH} and~\eqref{cond:entropy:Lax},
  $r(u)$ is monostable in the sense of Definition~\ref{def:reaction:type}.
 Moreover, function $r$ is strictly concave,
  since $r''(u)=-f''(u)$ and $f\in C^2(\R)$ is strictly convex. 
 In fact, $(\upm,0)$ are the only stationary points of system~\eqref{RD:TWE:local:system}, where $(\um,0)$ is a saddle point and $(\up,0)$ is a stable node.
Thus, for all wave speeds $\epsilon$ there exists a TWS $(\profile,\epsilon)$ -- with possibly oscillatory profile~$\profile$ -- of
 reaction-diffusion equation~\eqref{eq:RD:h}.
Moreover, $(\profile,c)$ is a TWS of~\eqref{eq:KdVB:f},
 due to~\eqref{KdVB:TWE:local}--\eqref{eq:RD:h}.
\qed
\end{proof}

The TWP for KdVB equations~\eqref{eq:KdVB:f} with Burgers' flux~$f(u) =u^2$ has been investigated in~\cite{Bona+Schonbek:1985}.
The sign of $\delta$ in~\eqref{eq:KdVB:f} is irrelevant,
 since it can be changed by a transformation
 $\tilde{x}=-x$ and $\tilde{u}(\tilde{x},t)=-u(x,t)$,
 see also~\cite{Jacobs+McKinney+Shearer:1995}.
First, the results in Theorem~\ref{thm:KdVB:TWP:f:convex} on the existence of TWS and geometry of its profiles are proven. 
More importantly, the authors investigate the convergence of profiles $\profile(\xi;\epsilon,\delta)$ in the limits $\epsilon\to 0$, $\delta\to 0$, as well as $\epsilon$ and $\delta$ tending to zero simultaneously. 
Assuming that the ratio $\delta/\epsilon^2$ remains bounded,
 they show that the TWS converge to the classical Lax shocks
 for this vanishing diffusive-dispersive regularization~\cite{Bona+Schonbek:1985}.

\medskip \noindent
\textbf{concave-convex flux functions.}
\begin{definition}[\cite{LeFloch:2002}] \label{flux:concave-convex}
A function $f\in C^3(\R)$ is called \emph{concave-convex} if 
\begin{equation}\label{cond:flux:concave-convex}
 u f''(u)> 0 \quad \forall u\ne 0 \ , \quad 
 f'''(0)\ne 0 \ , \quad
 \lim_{u\to\pm\infty} f'(u) = +\infty \ .
\end{equation}
\end{definition}
Here the single inflection point is shifted without loss of generality to the origin.
We consider a cubic flux function $f(u)=u^3$ as the prototypical concave-convex flux function with a single inflection point,
 see \cite{Hayes+Shearer:1999, LeFloch:2002}.

\begin{proposition}[{\cite{Jacobs+McKinney+Shearer:1995,Hayes+LeFloch:1997}}] \label{prop:JMcKS}
Suppose $f(u) =u^3$ and $\epsilon>0$.
\begin{enumerate}[label=(\alph*)]
\item \label{JMcKS:delta:non-pos} If $\delta\leq 0$ then a TWS $(\profile,c)$ of \eqref{eq:KdVB:f} exists 
 if and only if $\ShockTriple$ satisfy the Rankine-Hugoniot condition~\eqref{RH} and the entropy condition~\eqref{cond:entropy:Lax}.
\item \label{JMcKS:delta:pos} If $\delta> 0$ then a TWS $(\profile,c)$ of \eqref{eq:KdVB:f} exists for $\um>0$ if and only if $\up\in S(\um)$ with
\begin{equation} \label{set:Sum:JMcKS}
S(\um) = 
\begin{cases}
[-\frac{\um}{2} , \um ) & \xx{if} \um \leq 2\beta \ , \\
\{ -\um + \beta \} \cup [-\beta , \um ) & \xx{if} \um > 2\beta \ ,
\end{cases} 
\end{equation}
where the coefficient $\beta$ is given by $\beta=\tfrac{\sqrt{2}}{3}\tfrac{\epsilon}{\sqrt{\delta}}$.
\end{enumerate}
\end{proposition}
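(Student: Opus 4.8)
The plan is to reduce the TWP for~\eqref{eq:KdVB:f} with $f(u)=u^3$, exactly as in the proof of Theorem~\ref{thm:KdVB:TWP:f:convex}, to a heteroclinic connection problem for the once-integrated traveling wave equation~\eqref{KdVB:TWE:local:integrated}, written as the planar system $\profile'=w$, $\delta w'=h(\profile)-\epsilon w$ (or, when $\delta=0$, as the scalar equation $\epsilon\profile'=h(\profile)$). Requiring the two endstates to be equilibria forces the Rankine--Hugoniot condition~\eqref{RH}, after which $\speed=\um^2+\um\up+\up^2$ and $h$ is the cubic with roots $\um$, $\up$ and $\uStar:=-(\um+\up)$, so the equilibria are $(\um,0)$, $(\up,0)$, $(\uStar,0)$. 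Two facts come for free. First, by the Bendixson--Dulac criterion the system has no periodic orbits, its divergence being the constant $-\epsilon/\delta$. Second, the type of each equilibrium is read from~\eqref{KdVB:DF:EV}: for $\epsilon,\delta>0$ the point $(\up,0)$ is a sink iff $h'(\up)<0$, i.e.\ iff $\up>-\um/2$ (the Lax regime~\eqref{cond:entropy:Lax}), and a saddle iff $\up<-\um/2$; $(\um,0)$ has an unstable direction iff $h'(\um)>0$, i.e.\ iff $\up>-2\um$; and for Lax shocks the roots are ordered $\uStar<\up<\um$ with $(\uStar,0)$ a saddle, whereas for $\delta<0$ under the Lax condition $(\um,0)$ and $(\uStar,0)$ are repellers and $(\up,0)$ a saddle.

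For part~\ref{JMcKS:delta:non-pos}: if $\delta=0$, the scalar equation $\epsilon\profile'=h(\profile)$ has a heteroclinic from $\um$ to $\up$ iff these are adjacent simple roots of $h$ with the correct orientation, i.e.\ $h'(\um)>0>h'(\up)$ and $h<0$ on $(\up,\um)$, which given~\eqref{RH} is precisely~\eqref{cond:entropy:Lax}; if~\eqref{cond:entropy:Lax} fails then $\uStar$ separates $\up$ from $\um$ (or the sign of $h$ is reversed) and no orbit connects them. If $\delta<0$, the change $\xi\mapsto-\xi$, $\profile\mapsto-\profile$ turns~\eqref{KdVB:TWE:local:integrated} into the traveling wave equation, at speed $\epsilon$, of a reaction-diffusion equation $w_t=\tilde h(w)+|\delta|w_{xx}$ with $\tilde h(w)=-h(-w)$; using $\uStar<\up<\um$ one checks that $\tilde h$ is \emph{monostable} in the sense of Definition~\ref{def:reaction:type} exactly when~\eqref{cond:entropy:Lax} holds. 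For a monostable reaction term the unstable manifold of the departing saddle reaches the target endstate for \emph{every} positive speed (monotonically above, and with damped oscillations below, the minimal speed of Proposition~\ref{prop:RD:TWS:existence}): no periodic orbit can absorb it, it cannot escape past the target because $\tilde h$ is sign-definite there, and it cannot be captured by the third equilibrium, which is now a \emph{sink} lying beyond the departing saddle. Hence a TWS of~\eqref{eq:KdVB:f} exists iff~\eqref{RH} and~\eqref{cond:entropy:Lax} hold, with the converse following from the equilibrium types listed above.

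For part~\ref{JMcKS:delta:pos}, the free phase-plane information gives only the trivial inclusions, and the sharp set $S(\um)$ in~\eqref{set:Sum:JMcKS} is the heart of the matter. The organizing observation is that the $\tanh$-ansatz $\profile(\xi)=\tfrac{a+b}{2}+\tfrac{b-a}{2}\tanh\!\big(\tfrac{a-b}{2\sqrt{2\delta}}\,\xi\big)$ solves~\eqref{KdVB:TWE:local:integrated} between two roots $a>b$ of $h$ exactly when $a+b=\beta$. For $(a,b)=(\um,\up)$ this gives an explicit heteroclinic on the line $\up=\beta-\um$, which is an undercompressive (saddle--saddle) connection when $\um>2\beta$ — the isolated point $\{-\um+\beta\}$ of $S(\um)$ — and one of the Lax connections when $\um<2\beta$; for $(a,b)=(\um,\uStar)$ it gives an explicit $\um\to\uStar$ heteroclinic on the line $\up=-\beta$, which acts as the separatrix bounding the Lax family. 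One then argues by continuation in $\up$, passing to the reaction-diffusion equation~\eqref{eq:RD:h} with $r=-h$ (monostable for $\up>-\um/2$, bistable for $\up<-\um/2$): for $\up$ just below $\um$ the relevant branch of $W^u(\um,0)$ descends monotonically into the sink $(\up,0)$; as $\up$ decreases, oscillations appear once $(\up,0)$ becomes a focus, but the connection persists until that branch coincides with $W^s(\uStar,0)$, which by the $\tanh$-computation happens exactly at $\up=-\beta$, after which (for $-\um/2<\up<-\beta$, possible only if $\um>2\beta$) the branch runs past $\uStar$ and no connection survives; in the bistable range $\up<-\um/2$, Proposition~\ref{prop:RD:TWS:existence} gives a \emph{unique} admissible speed, so a connection at speed $\epsilon$ occurs only on the already-found line $\up=\beta-\um$. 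Collecting these facts, and using no-periodic-orbits to exclude the remaining possibilities, yields~\eqref{set:Sum:JMcKS}. I expect the continuation step — identifying $W^s(\uStar,0)$ with the explicit $\um\to\uStar$ profile at $\up=-\beta$ and tracking the oscillating branch of $W^u(\um,0)$ against it — to be the main obstacle; the rest reduces to algebra on the roots of the cubic $h$.
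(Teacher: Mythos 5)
Your proposal is correct and follows the same logical skeleton as the paper's proof: impose~\eqref{RH} to get the factorization $h(u)=(u-\um)(u-\up)(u-\uStar)$ with $\uStar=-\um-\up$, classify the reaction term $r=-h$ of the associated equation~\eqref{eq:RD:h} by the ordering of the three roots (monostable/bistable/unstable as in Figure~\ref{fig:classification:cubic}), invoke Proposition~\ref{prop:RD:TWS:existence} together with the sign condition~\eqref{eq:c+R}, and isolate the two critical lines $\up=\beta-\um$ (the undercompressive saddle--saddle connection) and $\up=-\beta$ (the $\um\to\uStar$ separatrix bounding the Lax family). The genuine difference is in how those two lines are obtained: the paper imports them from \cite[Theorem 3.4]{Jacobs+McKinney+Shearer:1995}, whereas you derive them self-contained via the $\tanh$-ansatz, and your computation checks out --- using that the roots of $h$ sum to zero, matching coefficients in~\eqref{KdVB:TWE:local:integrated} forces $a+b=\tfrac{2\epsilon}{3\sqrt{2\delta}}=\beta$, which combined with the uniqueness of the monotone bistable front from Proposition~\ref{prop:RD:TWS:existence} rigorously pins down the isolated point of~\eqref{set:Sum:JMcKS} and identifies $\up=-\beta$ as the unique parameter where $W^u(\um,0)=W^s(\uStar,0)$. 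The Bendixson--Dulac observation (constant divergence $-\epsilon/\delta$) is a nice addition not present in the paper. What you correctly flag as the remaining obstacle --- showing that as $\up$ decreases through the monostable range the descending branch of $W^u(\um,0)$ lands in the sink $(\up,0)$ precisely until it collides with $W^s(\uStar,0)$ at $\up=-\beta$ and escapes past $\uStar$ thereafter --- is exactly the step the paper also does not prove but delegates to the cited reference; neither treatment is more complete than the other on this point, though you should also say a word about the degenerate boundary case $\up=-\um/2$ (where $\up=\uStar$ is a double root and $(\up,0)$ is a saddle-node), which is included in $S(\um)$ when $\um\le 2\beta$ but excluded when $\um>2\beta$.
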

\begin{proof}
Following the discussion from \eqref{eq:KdVB:f}--\eqref{eq:RD:h},
 we consider the associated reaction-diffusion equation~\eqref{eq:RD:h},
 i.e. $\sdiff{u}{t} = r(u) + \delta \sdifff{u}{x}{2}$ with $r(u)=-h(u)$.
From this point of view,
  we need to classify the reaction term $r(u)=-h(u)$:
Whereas $r(\um)=0$ by definition,
 $r(\up)=0$ if and only if $\ShockTriple$ satisfies the Rankine-Hugoniot condition~\eqref{RH}.
 The Rankine-Hugoniot condition implies $\speed=\up^2+\up\ \um+\um^2$.
 Hence, the reaction term $r(u)$ has a factorization
 \begin{equation} \label{r:cubic}
    r(u) = -(u^3-\um^3-c(u-\um)) 
	       = -(u-\um)\ (u-\up)\ (u+\up+\um)
 \end{equation}
Thus, $r(u)$ is a cubic polynomial with three roots $u_1 \leq u_2 \leq u_3$,
 such that $r(u) = -(u-u_1) (u-u_2) (u-u_3)$. 
In case of distinct roots $u_1 < u_2 < u_3$ we deduce $r'(u_1)<0$, $r'(u_2)>0$ and $r'(u_3)<0$.
The ordering of the roots $\upm$ and $\uStar=-\um-\up$ depending on $\upm$ is visualized in Figure~\ref{fig:classification:cubic}.
\begin{figure} 
\begin{center}
 \includegraphics{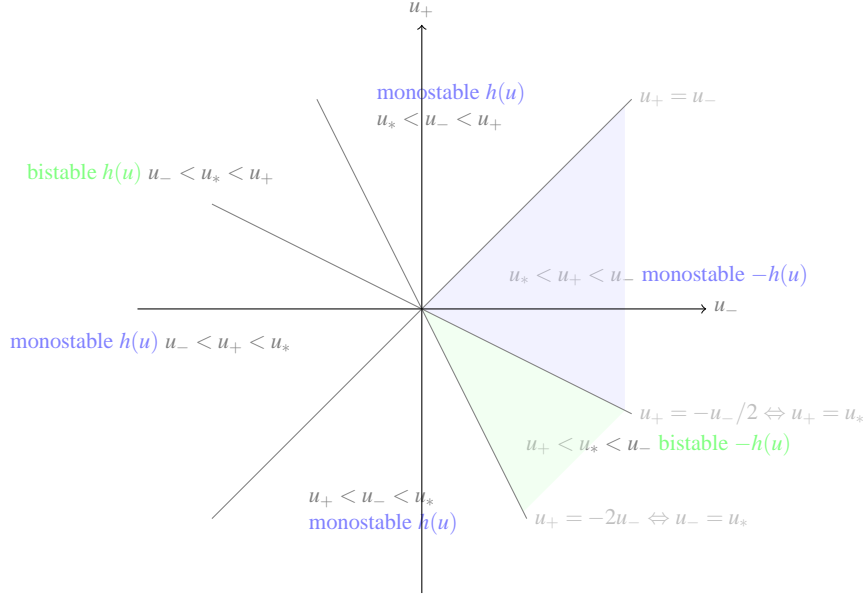}
\end{center}
 \caption{classification of the cubic reaction function $r(u)=-h(u)$ in~\eqref{r:cubic} depending on its roots $\um$, $\up$ and $\uStar=-\um-\up$ according to Definition~\ref{def:reaction:type}.}
 \label{fig:classification:cubic}
\end{figure}
 Next, we will discuss the results in Proposition~\ref{prop:JMcKS}\ref{JMcKS:delta:pos} (for $\um>0$ and $\delta>0$)
 via results on the existence of TWS for a reaction-diffusion equation~\eqref{eq:RD:h}.
 \begin{enumerate}
  \item For $\up<\uStar<\um$, function $r(u)$ is bistable, see also Figure~\ref{fig:classification:cubic}.
   Due to Proposition~\ref{prop:RD:TWS:existence},
	  there exists an (up to translations) unique TWS $(\profile,\epsilon)$
		with possibly negative wave speed.
	 Under our assumption that the wave speed $\epsilon$ is positive,
	  relation~\eqref{eq:c+R} yields the restriction $-\up>\um$. 
  In fact, for $\um> 2\beta$ and $\up=-\um+\beta$
	 there exists a TWS $(\profile,\epsilon)$ for reaction-diffusion equation~\eqref{eq:RD:h},
	 see \cite[Theorem 3.4]{Jacobs+McKinney+Shearer:1995}.
	The function $r$ is bistable with $\uStar=-\um-\up=-\beta$, hence $f'(\upm)>\speed$.
	This violates Lax' entropy condition~\eqref{cond:entropy:Lax}
   and is known in the shock wave theory as a slow undercompressive shock~\cite{LeFloch:2002}.
 \item For $\uStar<\up<\um$, function $r(u)$ is monostable, see Figure~\ref{fig:classification:cubic}.
  Due to Proposition~\ref{prop:RD:TWS:existence}, 
	  there exists a critical wave speed $\speed_*$,
    such that monotone TWS $(\profile,\epsilon)$ for~\eqref{eq:RD:h} exist for all $\epsilon\geq\speed_*$.
  However, not all endstates $(\um,\up)$ in the subset defined by $\uStar<\up<\um$ admit a TWS~$\TWS$,
	  see~\eqref{set:Sum:JMcKS} and Figure~3b).
	 The TWS $\TWS$ associated to non-classical shocks appear again, with reversed roles for the roots $\up$ and $\uStar$:
	 For $\um> 2\beta$ and $\up = -\beta$,
    there exists a TWS~$(\profile,\epsilon)$ for reaction-diffusion equation~\eqref{eq:RD:h},
		see \cite[Theorem 3.4]{Jacobs+McKinney+Shearer:1995}.
	 These TWS form a horizontal halfline in Figure~3b)
	  and divides the set defined by $\uStar<\up<\um$ into two subsets.
	 In particular, TWS exist only for endstates $(\um,\up)$ in the subset above this halfline.
  \item For $\up<\um<\uStar$, function $r(u)=-h(u)$ satisfies $r(u)<0$ for all $u\in(\up,\um)$,
	 see also Figure~\ref{fig:classification:cubic}.
   Thus the necessary condition~\eqref{eq:c+R} can not be fulfilled for positive $\RDspeed=\epsilon$,
	  hence there exists no TWS $(\profile,\epsilon)$ for the reaction-diffusion equation.
 \item For $\uStar<\um<\up$, function $r(u)$ is monostable with reversed roles of the endstates~$\upm$, see Figure~\ref{fig:classification:cubic}. Due to Proposition~\ref{prop:RD:TWS:existence}, there exists a TWS $(\profile,\epsilon)$ however satisfying $\lim_{\xi\to\mp\infty} \profile(\xi) = \upm$.
 \end{enumerate}
If $\delta=0$, then equation~\eqref{eq:KdVB:f} is a viscous conservation law,
 and its TWE \eqref{KdVB:TWE:local:integrated} is a simple ODE $-\epsilon \profile' = r(\profile)$ with $r(u)=-h(u)$.
Thus a heteroclinic orbit exists only for monostable $r(u)$,
 i.e. if the unstable node $\um$ and the stable node $\up$ are not separated by any other root of $r$.

If $\delta<0$, then we rewrite TWE~\eqref{KdVB:TWE:local:integrated} as $\epsilon \profile' = h(u) + |\delta| \profile''$. 
It is associated to a reaction-diffusion equation $\sdiff{u}{t} = h(u) + |\delta|\sdifff{u}{x}{2}$
 via a TWS ansatz $u(x,t) = \profile(x-(-\epsilon) t)$;
 note the change of sign for the wave speed. 
If $\up<\uStar<\um$ then $h(u)$ is an unstable reaction function. 
Thus there exists no TWS $(\profile,-\epsilon)$ according to Proposition~\ref{prop:RD:TWS:existence}. 
If $\uStar<\up<\um$ then function $h(u)=-r(u)$ satisfies $h(u)<0$ for all $u\in(\up,\um)$,
 see also Figure~\ref{fig:classification:cubic}. 
The necessary condition~\eqref{eq:c+R} is still fine,
 since also the sign of the wave speed changed. 
In contrast to the case $\delta>0$, there exists no TWS connecting $\um$ with $\uStar$, which would indicate a bifurcation. 
Thus, the existence of TWS for all pairs $(\um,\up)$ in the subset defined by $\uStar<\up<\um$ can be proven.
The TWP for other pairs $(\um,\up)$ is discussed similarly.

\qed
\end{proof}
 
\begin{figure}
\begin{center}
\includegraphics{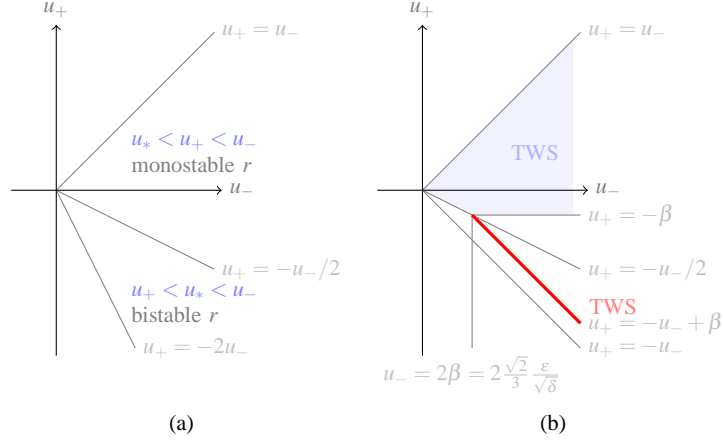}
\end{center}
\caption{a) classification of reaction function $r$ depending on its roots $\um$, $\up$ and $\uStar=-\um-\up$;
b) Endstates $\upm$ in the shaded region and on the thick line can be connected by TWS of the cubic KdVB equation;
 TWS in the shaded region and on the thick line are associated to classical and non-classical shocks of $\sdiff{u}{t}+\sdiff{u^3}{x} = 0$, respectively. 
For a classical shock the shock triple satisfies Lax' entropy condition $f'(\um)>\speed>f'(\up)$;
 i.e. characteristics in the Riemann problem meet at the shock.
In contrast, the non-classical shocks are of slow undercompressive type,
  i.e. characteristics in the Riemann problem cross the shock.
}
 \label{fig:KdVB}
\end{figure}

\section{TWP for nonlocal evolution equations} \label{sec:nonlocal}

\subsection{reaction-diffusion equations}
The first example of a reaction-diffusion equation with nonlocal diffusion is the integro-differential equation
\begin{equation} \label{RD:r:J*u-u}
 \sdiff{u}{t} = J \ast u - u + r(u) \ , \quad t>0 \ , \quad x\in\R \ ,
\end{equation}
for some even, non-negative function $J$ with mass one, i.e. for all $x\in\R$
\begin{equation} \label{as:J} 
 J\in C(\R)\ , \quad J\geq 0\ ,  \quad J(x)=J(-x)\ , \quad \integral{\R}{ J(y)}{y}=1 \ , 
\end{equation}
and some function~$r$.
The operator $\Levy[u] = J\ast u -u$ is a L\'evy operator, see~\eqref{IG:compoundPoisson:II},
 which models nonlocal diffusion. 
It is the infinitesimal generator of a compound Poisson stochastic process, which is a pure jump process.

The TWP for given endstates $\upm$
 is to study the existence of a TWS~$\TWS$ for~\eqref{RD:r:J*u-u} in the sense of Definition~\ref{def:TWS}.
Such a TWS $(\profile,\RDspeed)$ satisfies the TWE
 $-\RDspeed \profile' = J \ast \profile - \profile + r(\profile)$ for $\xi\in\R$.
Next, we recall some results on the TWP for~\eqref{RD:r:J*u-u}, 
 which will depend crucially on the type of reaction function $r$ 
 and the tail behavior of a kernel function $J$.
We will present the existence of TWS with monotone decreasing profiles $\profile$, 
 which will follow from the cited literature after a suitable transformation. 
\begin{proposition}[{(monostable \cite{Coville+Dupaigne:2007}), (bistable \cite{Bates+etal:1997,Chen:1997})}] \label{prop:RD:J*u-u:TWS:existence}
 Suppose $\um>\up$ and consider reaction functions~$r$ in the sense of Definition~\ref{def:reaction:type}.
 Suppose $J\in W^{1,1}(\R)$ and its continuous representative satisfies~\eqref{as:J}.
\begin{itemize}
\item If $r$ is monostable and there exists $\lambda>0$ such that $\integral{\R}{J(y)\ \exp (\lambda y)}{y} < \infty$
 then there exists a positive constant $\RDspeed_*$ such that for all $\RDspeed\geq \RDspeed_*$
 there exists a monotone TWS $(\profile,\RDspeed)$ of \eqref{RD:r:J*u-u}.
 For $\RDspeed<\RDspeed_*$ no such monotone TWS exists.
\item If $r$ is bistable and $\integral{\R}{\abs{y} J(y)}{y}<\infty$,
 then there exists an (up to translations) unique monotone TWS $(\profile,\RDspeed)$ of \eqref{RD:r:J*u-u}.
\end{itemize}
\end{proposition}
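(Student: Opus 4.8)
**Proof proposal for Proposition~\ref{prop:RD:J*u-u:TWS:existence}.**

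The plan is to reduce the two assertions to results already in the literature, the only real work being to fix orientations and signs so that the cited theorems (which are typically phrased for monotone \emph{increasing} fronts connecting $0$ to $1$, or for symmetric situations) match Definition~\ref{def:TWS}, where $\um>\up$ and the profile is monotone \emph{decreasing}. First I would perform the reflection $w(\xi):=\profile(-\xi)$ together with, if needed, an affine rescaling of the dependent variable sending the endstates to a normalized pair; under $\xi\mapsto-\xi$ the TWE $-\RDspeed\profile'=J\ast\profile-\profile+r(\profile)$ becomes $+\RDspeed w'=J\ast w-w+r(w)$ (using that $J$ is even, so the convolution is invariant under reflection), i.e. a traveling wave with reversed speed and now \emph{increasing} profile. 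This is the standard normalization under which \cite{Coville+Dupaigne:2007} treats the monostable case and \cite{Bates+etal:1997,Chen:1997} treat the bistable case.

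For the monostable part, once the orientation is fixed the hypotheses of Definition~\ref{def:reaction:type} ($r'(\um)<0$, $r'(\up)>0$, $r>0$ on $(\up,\um)$) translate precisely into the Fisher--KPP-type/monostable structure used in \cite{Coville+Dupaigne:2007}; the extra exponential-moment assumption $\int_\R J(y)e^{\lambda y}\,\dd y<\infty$ for some $\lambda>0$ is exactly the decay condition on the kernel that guarantees a finite minimal speed $\RDspeed_*>0$ and existence of monotone fronts for every $\RDspeed\ge\RDspeed_*$, with nonexistence of monotone fronts below $\RDspeed_*$. I would cite \cite[Theorems on existence and nonexistence of monotone fronts]{Coville+Dupaigne:2007} and note that $J\in W^{1,1}(\R)$ with \eqref{as:J} supplies the regularity of $J$ required there. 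The constant $\RDspeed_*$ here is the image under the reflection of the minimal speed in the cited reference; one must record that the sign works out so that $\RDspeed_*>0$ in the present orientation, which follows from the balance relation analogous to \eqref{eq:c+R}.

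For the bistable part, the condition $r'(\upm)<0$ with a single interior sign change at $\uStar$ is the classical bistable (Nagumo-type) nonlinearity, and \cite{Chen:1997} (see also \cite{Bates+etal:1997}) proves existence and uniqueness up to translation of a monotone front, together with a unique admissible speed, under the first-moment assumption $\int_\R|y|J(y)\,\dd y<\infty$ on the kernel; again $J\in W^{1,1}(\R)$ gives the needed smoothness of $J$. Transporting this back through $\xi\mapsto-\xi$ yields the stated monotone decreasing profile and the (up to translation) uniqueness. The main obstacle—really the only point requiring care—is bookkeeping: checking that Definition~\ref{def:TWS}'s requirement $\profile\in C^2(\R)$ with vanishing first and second derivatives at $\pm\infty$ is compatible with the (typically only $C^1$, or merely bounded-monotone) regularity delivered by the cited theorems. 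This I would handle by a bootstrap directly on the TWE: from $\profile\in L^\infty$ and $J\in W^{1,1}$ one gets $J\ast\profile\in C^1$ hence $\profile'\in C^1$ from $-\RDspeed\profile'=J\ast\profile-\profile+r(\profile)$ (when $\RDspeed\ne0$), and differentiating once more upgrades to $\profile\in C^2$; the limits $\profile^{(n)}(\xi)\to0$ then follow from monotonicity and boundedness of $\profile$ together with the TWE evaluated near $\pm\infty$. In the degenerate bistable case $\RDspeed=0$ (balanced potential) one argues instead directly from $0=J\ast\profile-\profile+r(\profile)$, which still forces $\profile\in C^2$ since the right-hand side inherits the smoothness of $J\ast\profile$ and $r$.
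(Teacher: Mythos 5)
The paper gives no proof of this proposition beyond the citations and the remark that the statements ``follow from the cited literature after a suitable transformation,'' and your proposal fleshes out exactly that route: the reflection $\xi\mapsto-\xi$ (using that $J$ is even), the matching of Definition~\ref{def:reaction:type} with the monostable/bistable hypotheses of \cite{Coville+Dupaigne:2007} and \cite{Bates+etal:1997,Chen:1997}, and a bootstrap on the TWE for the $C^2$ regularity required by Definition~\ref{def:TWS}. This is the same approach as the paper, with more detail than the paper itself supplies.
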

For monostable reaction functions, 
 the tail behavior of kernel function $J$ is very important.
There exist kernel functions $J$, such that TWS exist only for bistable -- but not for monostable -- reaction functions $r$,
 see~\cite{Yagisita:2009:monostable}.
The prime example are kernel functions $J$ 
 which decay more slowly than any exponentially decaying function as $|x|\to\infty$ in the sense that
 $J(x)\exp(\eta |x|) \to\infty$ as $|x|\to\infty$ for all $\eta>0$.

For reaction-diffusion equations of bistable type,
 Chen established a unified approach~\cite{Chen:1997} to prove the existence, uniqueness
 and asymptotic stability with exponential decay of traveling wave solutions.
The results are established for a subclass of nonlinear nonlocal evolution equations
 \[ \sdiff{u}{t} (x,t) = \operator{A}[u(\cdot,t)](x) \Xx{for} (x,t)\in\R\times (0,T]\ , \]
 where the nonlinear operator $\operator{A}$ is assumed to 
 \begin{enumerate}[label=(\alph*)]
  \item be independent of $t$;
  \item generate a $L^\infty$ semigroup;
  \item be translational invariant, i.e. $\operator{A}$ satisfies for all $u\in\dom\operator{A}$ the identity
   \[ \operator{A}[u(\cdot+h)](x) = \operator{A}[u(\cdot)](x+h)\quad \forall x\ ,h \in\R\,. \]
   Consequently, there exists a function $r:\R\to\R$
   which is defined by $\operator{A}[\upsilon \mathbf 1]=r(\upsilon)\mathbf 1$
	 for $\upsilon\in\R$ and the constant function $\mathbf 1:\R\to\R$, $x\mapsto 1$.
  This function $r$ is assumed to be bistable in the sense of Definition~\ref{def:reaction:type}; 
  \item satisfy a comparison principle:
    If $\sdiff{u}{t}\geq\operator{A}[u]$, $\sdiff{v}{t}\leq\operator{A}[v]$ and $u(\cdot ,0)\gneq v(\cdot ,0)$,
    then $u(\cdot ,t)>v(\cdot ,t)$ for all $t>0$.
 \end{enumerate}
Chen's approach relies on the comparison principle
 and the construction of sub- and supersolutions for any given traveling wave solution.
Importantly, the method does not depend on the balance of the potential. 
More quantitative versions of the assumptions on $\operator{A}$ are needed in the proofs.
Finally integro-differential evolution equations
\begin{equation} \label{eq:Chen:IDE}
 \sdiff{u}{t} = \epsilon \sdifff{u}{x}{2} + G(u,J_1\ast S^1(u),\ldots,J_n\ast S^n(u))
\end{equation}
 are considered for some diffusion constant $\epsilon\geq 0$, smooth functions $G$ and $S^k$,
 and kernel functions $J_k\in C^1(\R)\cap W^{1,1}(\R)$ satisfying~\eqref{as:J} where $k=1,\ldots,n$. 
Additional assumptions on the model parameters guarantee that 
 an equation~\eqref{eq:Chen:IDE} can be interpreted as a reaction-diffusion equation with bistable reaction function
 including equations~\eqref{RD:r:local} and~\eqref{RD:r:J*u-u} as special cases.

Another example of reaction-diffusion equations with nonlocal diffusion are the integro-differential equations
\begin{equation} \label{RD:r:RieszFeller}
  \sdiff{u}{t} = \RieszFeller u + r(u) \ , \quad t>0 \ , \quad x\in\R \ ,
\end{equation}
for a (particle) density $u=u(x,t)$, some function $r=r(u)$,
and a Riesz-Feller operator $\RieszFeller$ with $(\alpha,\theta)\in\RFall$.
The nonlocal Riesz-Feller operators are models for superdiffusion,
 where from a probabilistic view point a cloud of particle is assumed to spread faster than by following Brownian motion.
Integro-differential equation~\eqref{RD:r:RieszFeller} can be derived as a macroscopic equation for a particle density in the limit of modified Continuous Time Random Walk (CTRW), see~\cite{Mendez+etal:2010}.
In the applied sciences, equation~\eqref{RD:r:RieszFeller} has found many applications, 
 see \cite{Uchaikin:2013:bothVolumes, Volpert+etal:2013} for extensive reviews on modeling, formal analysis and numerical simulations. 

The TWP for given endstates $\upm$
 is to study the existence of a TWS~$\TWS$ for~\eqref{RD:r:RieszFeller} in the sense of Definition~\ref{def:TWS}.
Such a TWS $(\profile,\RDspeed)$ satisfies the TWE
\begin{equation} \label{RD:TWE:RieszFeller}
 -\RDspeed \profile' = \RieszFeller \profile + r(\profile) \ , \quad \xi\in\R \ .
\end{equation}
First we collect mathematical rigorous results about the TWP associated to~\eqref{RD:r:RieszFeller}
 in case of the fractional Laplacian $\RieszFellerII{\RFalpha}{0} = -(-\Delta)^{\RFalpha/2}$ for $\RFalpha\in(0,2)$,
 i.e. a Riesz-Feller operator $\RieszFeller$ with $\RFtheta=0$.
\begin{proposition}[{(monostable \cite{Cabre+Roquejoffre:2009,Cabre+Roquejoffre:2013,Engler:2010}), (bistable \cite{Cabre+SolaMorales:2005, Cabre+Sire:2014, Cabre+Sire:2015, Palatucci+etal:2013,Chmaj:2013,Gui+Zhao:2015})}] \label{prop:RD:fracLaplacian:TWS:existence}
 Suppose $\um>\up$. Consider the TWP for reaction-diffusion equation~\eqref{RD:r:RieszFeller} with functions~$r$ in the sense of Definition~\ref{def:reaction:type} and fractional Laplacian~$\RieszFellerII{\RFalpha}{0}$, i.e. symmetric Riesz-Feller operators $\RieszFeller$ with $0<\RFalpha<2$ and $\RFtheta=0$.
\begin{itemize}
\item If $r$ is monostable then there does not exist any TWS $(\profile,\RDspeed)$ of \eqref{RD:r:RieszFeller}.
\item If $r$ is bistable then there exists an (up to translations) unique monotone TWS $(\profile,\RDspeed)$ of \eqref{RD:r:RieszFeller}.
\end{itemize}
\end{proposition}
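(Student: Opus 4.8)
The plan is to reduce to a normal form and then treat the two cases of Definition~\ref{def:reaction:type} by entirely different mechanisms. First I would apply the affine substitution $u\mapsto(u-\up)/(\um-\up)$; since $\RieszFellerII{\RFalpha}{0}$ is linear and annihilates constants, this leaves the operator unchanged, merely rescales $r$ (preserving its type and the signs $r'(\upm)$) and does not alter the wave speed, so I may assume $\up=0<1=\um$ and seek a monotone decreasing profile $\profile$ with $\profile(-\infty)=1$, $\profile(+\infty)=0$. As $\RieszFellerII{\RFalpha}{0}$ commutes with the reflection $x\mapsto -x$, this is equivalent to the increasing-front set-up of the cited literature, and is the transformation alluded to after the statement.

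For the \emph{monostable} case I would prove nonexistence by a comparison/acceleration contradiction. Suppose a TWS $(\profile,\RDspeed)$ of~\eqref{RD:r:RieszFeller} exists. Take as initial datum the truncated profile $u_0:=\profile\cdot 1_{(-\infty,0]}$, so that $0\le u_0\le\profile$ with $u_0\equiv 0$ on $(0,\infty)$ and $u_0\to 1$ as $\xi\to-\infty$. By Propositions~\ref{prop:MarkovSG} and~\ref{prop:SSPM}, $\RieszFellerII{\RFalpha}{0}$ generates a positivity-preserving Markov semigroup, so~\eqref{RD:r:RieszFeller} obeys a comparison principle for the nonlinearity $r$ (Lipschitz on $[0,1]$); since $\profile(\cdot-\RDspeed t)$ is an exact solution dominating $u_0$ at $t=0$, the solution with datum $u_0$ stays below $\profile(x-\RDspeed t)$, so for every $\lambda\in(0,1)$ the super-level set $\{x:u(x,t)\ge\lambda\}$ has right edge at most $\RDspeed t+\profile^{-1}(\lambda)$ -- a bound \emph{linear in $t$} on the spreading. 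This contradicts the propagation of level sets at rate $\sim\exp(r'(\up)\,t/\RFalpha)$, i.e.\ \emph{exponential in $t$}, known for monostable reactions under fractional diffusion \cite{Cabre+Roquejoffre:2009,Cabre+Roquejoffre:2013,Engler:2010}; hence no TWS exists. A direct sanity check on the profile is also instructive: near $\xi=+\infty$ the heavy-tailed L\'evy measure~\eqref{eq:LevyMeasure1D} forces $\RieszFellerII{\RFalpha}{0}\profile(\xi)\gtrsim|\xi|^{-\RFalpha}>0$ (from the jump to the value $1$ at $-\infty$), while monotonicity and $r'(\up)>0$ give $r(\profile(\xi))\ge\tfrac12 r'(\up)\,\profile(\xi)>0$, so~\eqref{RD:TWE:RieszFeller} would equate a sum of strictly positive quantities to zero at leading order.

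For the \emph{bistable} case I would first record the a priori tail rate $|\profile(\xi)-\upm|\asymp C_\pm|\xi|^{-\RFalpha}$ as $\xi\to\pm\infty$, with $C_\pm>0$ fixed by~\eqref{eq:LevyMeasure1D} and $r'(\upm)<0$ -- the analogue of the heavy-tailed regime in Proposition~\ref{prop:RD:J*u-u:TWS:existence} -- since this rate prescribes the shape of all barriers. Existence I would obtain by either route used in the references: (A) pass to the Caffarelli--Silvestre extension, turning~\eqref{RD:TWE:RieszFeller} into a degenerate-elliptic problem on $\R\times(0,\infty)$ with a nonlinear boundary reaction, and produce the profile as a minimizer of the associated localized energy over monotone competitors, or by homotopy to the local model~\eqref{RD:TWE:local}, as in \cite{Cabre+SolaMorales:2005,Cabre+Sire:2014,Cabre+Sire:2015,Palatucci+etal:2013}; or (B) build ordered sub- and supersolutions of~\eqref{RD:r:RieszFeller} carrying the tails above and run Chen's comparison scheme \cite{Chen:1997}, adapted to Riesz-Feller operators in \cite{Achleitner+Kuehn:2015} (see also \cite{Chmaj:2013,Gui+Zhao:2015}), for which Theorem~\ref{thm:RieszFeller:extension} and estimate~\eqref{eq:estimate:RieszFeller} supply exactly the required mapping properties on $C^2_b(\R)$. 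The admissible speed $\RDspeed$ and its sign I would pin down by multiplying~\eqref{RD:TWE:RieszFeller} by $\profile'$ and integrating over $\R$: the nonlocal term $\int_\R\profile'\,\RieszFellerII{\RFalpha}{0}\profile\dd[\xi]$ vanishes because $\RieszFellerII{\RFalpha}{0}$ is self-adjoint and commutes with $\partial_\xi$ (equivalently, its Fourier symbol $-|\FVar|^{\RFalpha}$ is real and even), leaving $-\RDspeed\|\profile'\|_{L^2}^2=R(\up)-R(\um)$ exactly as in~\eqref{eq:c+R}, so in the balanced case $R(\up)=R(\um)$ one gets $\RDspeed=0$ and a stationary profile. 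Monotonicity of $\profile$ then follows from the sliding method together with the strong maximum principle for $\RieszFellerII{\RFalpha}{0}$, valid since $\GreenII{\RFalpha}{0}>0$ by Proposition~\ref{prop:SSPM}\ref{K:SFDE:prop0}; and uniqueness up to translation follows from the same sliding argument, pushing one profile past the other until they touch.

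The hard part throughout is the \emph{heavy, merely algebraic tail} $|\xi|^{-\RFalpha}$ of the profile: it renders the exponential barriers of Section~\ref{sec:local} useless, it is the very mechanism behind the monostable nonexistence, and in the bistable case every sub/supersolution and every integration-by-parts identity has to be carried out with these slowly decaying tails. The ODE phase-plane analysis that settled the local problems is unavailable and is replaced, step by step, by the comparison principle and the strong maximum principle for the Riesz-Feller operator.
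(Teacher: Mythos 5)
Your proposal is correct in outline and matches the paper's treatment: the paper states this proposition purely as a survey of the cited literature (it gives no proof of its own), and its surrounding discussion attributes the monostable nonexistence to exactly the mechanism you use --- exponential-in-time propagation of level sets forced by the algebraic tails of the heat kernel $\GreenII{\RFalpha}{0}$, which is incompatible with a profile translating at constant speed --- and the bistable existence/uniqueness to the extension/variational, homotopy, approximation and comparison-principle methods you list. One small caveat: as the paper itself notes, the variational/extension route in your option (A) has so far only produced the balanced (stationary, $\RDspeed=0$) case, so for unbalanced potentials the argument must go through the homotopy of Gui--Zhao, Chmaj's approximation by convolution operators, or a Chen-type comparison scheme.
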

For monostable reaction functions,
 Cabr\'e and Roquejoffre prove that a front moves exponentially in time~\cite{Cabre+Roquejoffre:2009,Cabre+Roquejoffre:2013}.
They note that the genuine algebraic decay of the heat kernels~$\GreenII{\RFalpha}{0}$ associated to fractional Laplacians
 is essential to prove the result, which implies that no TWS with constant wave speed can exist.
Engler~\cite{Engler:2010} considered the TWP for~\eqref{RD:r:RieszFeller}
 for a different class of monostable reaction functions $r$ and non-extremal Riesz-Feller operators~$\RieszFeller$
 with $(\RFalpha,\RFtheta)\in\RFnonextremal$ and $\RFnonextremal := \set{(\RFalpha,\RFtheta)\in\RFall}{ |\RFtheta|<\min\{\RFalpha, 2-\RFalpha\} }$.
Again the associated heat kernels $\Green(x,t)$ with $(\RFalpha,\RFtheta)\in\RFnonextremal$ decay algebraically in the limits $x\to\pm\infty$, see~\cite{Mainardi+Luchko+Pagnini:2001}.

To our knowledge,
 we established the first result~\cite{Achleitner+Kuehn:2015}
 on existence, uniqueness (up to translations) and stability of traveling wave solutions of~\eqref{RD:r:RieszFeller}
 with Riesz-Feller operators~$\RieszFeller$ for $(\RFalpha,\RFtheta)\in\RFall$ with $1<\RFalpha< 2$
 and bistable functions~$r$. 
We present our results for monotone decreasing profiles,
 which can be inferred from our original result after a suitable transformation.
\begin{theorem}[\cite{Achleitner+Kuehn:2015}] \label{thm:AK:RD:RieszFeller}
Suppose $\um>\up$, $(\RFalpha,\RFtheta)\in\RFall$ with $1<\RFalpha< 2$, and $r\in C^\infty(\R)$ is a bistable reaction function. 
Then there exists an (up to translations) unique monotone decreasing TWS $\TWS$ of~\eqref{RD:r:RieszFeller} in the sense of Definition~\ref{def:TWS}.
\end{theorem}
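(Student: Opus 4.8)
The plan is to adapt Chen's unified method~\cite{Chen:1997}, outlined around~\eqref{eq:Chen:IDE}, to the operator $\operator{A}[u]:=\RieszFeller u+r(u)$, with the key estimate~\eqref{eq:estimate:RieszFeller} making the adaptation possible for $1<\RFalpha<2$. First I would verify the structural hypotheses (a)--(d) behind Chen's theorem. Autonomy~(a) is trivial; translation invariance~(c) holds because $\RieszFeller$ is a Fourier multiplier, and the induced spatially homogeneous reaction is exactly the bistable $r$. For~(b), and in order to speak of sub-/supersolutions at all, one must leave the $L^p$-setting of Proposition~\ref{prop:MarkovSG} and work on $\Cb(\R)$, since a front satisfies $\profile\notin L^1(\R)\cup L^2(\R)$: here $1<\RFalpha<2$ forces $|\RFtheta|\le 2-\RFalpha<1$, so $(\RFalpha,\RFtheta)\in\RFnonlocal$, Theorem~\ref{thm:RieszFeller:extension} provides the singular-integral representation~\eqref{eq:RieszFeller1b} of $\RieszFeller$, and Proposition~\ref{prop:RieszFeller:estimate} gives the boundedness $\RieszFeller:\Cb[2](\R)\to\Cb(\R)$. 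Together with the smoothness and strict positivity of the heat kernel $\Green$ (Proposition~\ref{prop:SSPM}; here $\RFtheta\ne\pm\RFalpha$, so $\Green>0$ everywhere) this yields a well-posed Cauchy problem on $\Cb(\R)$ and the strong comparison principle~(d): $\sdiff{u}{t}-\operator{A}[u]\ge 0\ge\sdiff{v}{t}-\operator{A}[v]$ together with $u(\cdot,0)\gneq v(\cdot,0)$ forces $u>v$ for $t>0$.

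Second, I would pin down the spatial asymptotics of any admissible profile, which differ fundamentally from Chen's original setting. Because the L\'evy measure~\eqref{eq:LevyMeasure1D}, and hence the heat kernel, have \emph{algebraic} tails, there is no exponentially decaying front; instead, substituting $\profile=\upm+\phi_\pm$ with $\phi_\pm\to 0$ into the TWE~\eqref{RD:TWE:RieszFeller} and noting that $\RieszFeller$ annihilates constants, the dominant balance $r'(\upm)\,\phi_\pm\approx-\RieszFeller\phi_\pm$ (with $r'(\upm)<0$) forces $\phi_\pm(\xi)\sim\mathrm{const}\cdot|\xi|^{-\RFalpha}$ as $\xi\to\pm\infty$. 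This algebraic rate is exactly what the comparison functions below must reproduce.

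Third, I would run Chen's squeezing scheme with algebraically-tailed super- and subsolutions. One fixes a smooth monotone decreasing template $\profile_0$ interpolating $\um$ and $\up$, with interior shape borrowed from the local bistable front of Proposition~\ref{prop:RD:TWS:existence} but with tails modified to decay like $|\xi|^{-\RFalpha}$, and sets $w^\pm(\xi,t)=\profile_0(\xi\mp\zeta(t))\pm q(t)$ with $\zeta,q$ solving a scalar auxiliary ODE system. Applying $\RieszFeller$ to such a bounded $C^2$ profile via~\eqref{eq:estimate:RieszFeller} on a bounded interface window, and via the algebraic decay of $\Green$ outside it, one verifies the super-/subsolution inequalities: in the interior the sign structure of $r$ around $\uStar$ dominates the nonlocal remainder, and in the tails the negativity of $r'(\upm)$ together with the matched rate $|\xi|^{-\RFalpha}$ does. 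Feeding $w^\pm$ into the comparison principle squeezes the evolution onto a bounded monotone traveling wave $\TWS$; uniqueness up to translation and (time-)exponential asymptotic stability then follow from Chen's contraction argument, once quantitative versions of (a)--(d) are in hand.

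The main obstacle is this third step. Unlike the local case, $\RieszFeller$ applied to a translated-and-shifted profile produces a remainder in which the behaviour near the interface is genuinely coupled to the far tails, so the super-/subsolution inequalities are not pointwise-local; the substantive work is to make estimate~\eqref{eq:estimate:RieszFeller}, with its constant $\cK$, cooperate with the $|\xi|^{-\RFalpha}$ tails and to absorb the borderline contributions of the $1_{(-1,1)}$ cutoff in~\eqref{eq:RieszFeller1b} (equivalently, the drift term generated by that representation) by tuning the window and the auxiliary functions. A secondary but unavoidable point is transporting the semigroup theory of Proposition~\ref{prop:MarkovSG} from $L^p(\R)$ to $\Cb(\R)$, without which the comparison machinery is not justified for fronts.
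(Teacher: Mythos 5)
Your proposal follows essentially the same route the paper takes (and attributes to~\cite{Achleitner+Kuehn:2015}): adapt Chen's comparison-principle/sub-and-supersolution scheme~\cite{Chen:1997}, using the singular-integral representation of Theorem~\ref{thm:RieszFeller:extension} and the key estimate~\eqref{eq:estimate:RieszFeller} to extend $\RieszFeller$ to $\Cb[2](\R)$ and to relax Chen's too-strict quantitative hypotheses, with algebraically decaying comparison functions replacing the exponential ones. The identification of the obstacles (the $\Cb$ versus $L^p$ setting, the nonlocal coupling of interface and tails, the $|\xi|^{-\RFalpha}$ decay) matches the modifications the paper indicates, so no substantive divergence to report.
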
 

The technical details of the proof are 
contained in~\cite{Achleitner+Kuehn:2015}, whereas in~\cite{Achleitner+Kuehn:2015:BCAM} we give a concise overview of the proof strategy
and visualize the results also numerically. 
In a forthcoming article~\cite{Achleitner+Kuehn:201x},
 we extend the results to all non-trivial Riesz-Feller operators $\RieszFeller$ with $(\RFalpha,\RFtheta)\in\RFnontrivial$. 
The smoothness assumption on $r$ is convenient, but not essential. 
To prove Theorem~\ref{thm:AK:RD:RieszFeller}, 
 we follow -- up to some modifications -- the approach of Chen~\cite{Chen:1997}.
It relies on a strict comparison principle
 and the construction of sub- and supersolutions for any given TWS.
His quantitative assumptions on operator~$\operator{A}$ are too strict, 
 such that his results are not directly applicable.
A modification allows to cover the TWP for~\eqref{RD:r:RieszFeller}
 for all Riesz-Feller operators~$\RieszFeller$ with $1<\RFalpha< 2$ also for non-zero $\RFtheta$,
 and all bistable functions~$r$ regardless of the balance of the potential.

Next, we quickly review different methods to study the TWP
 of reaction-diffusion equations~\eqref{RD:r:RieszFeller} with bistable function~$r$ and fractional Laplacian.
In case of a classical reaction-diffusion equation~\eqref{RD:r:local},
 the existence of a TWS can be studied via phase-plane analysis~\cite{Aronson+Weinberger:1974, Fife+McLeod:1977}.
This method has no obvious generalization to our TWP for~\eqref{RD:r:RieszFeller},
 since its traveling wave equation~\eqref{RD:TWE:RieszFeller} is an integro-differential equation.
The variational approach has been focused -- so far -- on symmetric diffusion operators such as fractional Laplacians 
 and on balanced potentials, hence covering only stationary traveling waves~\cite{Palatucci+etal:2013}.
Independently, the same results are achieved in~\cite{Cabre+SolaMorales:2005, Cabre+Sire:2014, Cabre+Sire:2015}
by relating the stationary TWE~\eqref{RD:TWE:RieszFeller}$_{\RFtheta=0, \RDspeed=0}$ 
 via~\cite{Caffarelli+Silvestre:2007} to a boundary value problem for a nonlinear partial differential equation.
The homotopy to a simpler TWP has been used to prove 
 the existence of TWS in case of~\eqref{RD:r:J*u-u},
 and~\eqref{RD:r:RieszFeller}$_{\RFtheta=0}$ with unbalanced potential~\cite{Gui+Zhao:2015}.

Chmaj~\cite{Chmaj:2013} also considers the TWP for~\eqref{RD:r:RieszFeller}$_{\RFtheta=0}$ with general bistable functions~$r$.
He approximates a given fractional Laplacian by a family of operators $J_\epsilon \ast u - (\int J_\epsilon)u$
 such that $\lim_{\epsilon\to 0} J_\epsilon \ast u - (\int J_\epsilon) u = \Riesz u$ in an appropriate sense.
This allows him to obtain a TWS of~\eqref{RD:r:RieszFeller}$_{\RFtheta=0}$ with general bistable function~$r$
 as the limit of the TWS~$u_\epsilon$ of~\eqref{RD:r:J*u-u} associated to $(J_\epsilon)_{\epsilon\geq 0}$. 
It might be possible to modify Chmaj's approach
 to study reaction-diffusion equation~\eqref{RD:r:RieszFeller} with asymmetric Riesz-Feller operators.
This would give an alternative existence proof for TWS in Theorem~\ref{thm:AK:RD:RieszFeller}.
However, Chen's approach allows to
 establish uniqueness (up to translations) and stability of TWS as well. 

\subsection{nonlocal Korteweg-de Vries-Burgers equation}
First we consider the integro-differential equation in multi-dimensions $d\geq 1$
\begin{equation} \label{KdVB:J*u-u}
 \sdiff{u}{t} + \sdiff{f(u)}{x} = \epsilon \Delta_x u + \gamma \epsilon^2 \sum_{j=1}^d \big( \phi_\epsilon \ast \sdiff{u}{x_j} -\sdiff{u}{x_j}\big)\ , \quad
  x\in\R^d \ , \quad t>0 \ , 
\end{equation}
 for parameters $\epsilon>0$, $\gamma\in\R$,
 a smooth even non-negative function $\phi$ with compact support and unit mass, i.e. $\integral{\R^d}{\phi(x)}{x}=1$,
 and the rescaled kernel function $\phi_\epsilon(x) = \phi(x/\epsilon) / \epsilon^d$.
It has been derived as a model for phase transitions with long range interactions close to the surface,
 which supports planar TWS associated to undercompressive shocks of~\eqref{SCL}, see~\cite{Rohde:2005}. 
A planar TWS $\TWS$ is a solution $u(x,t)=\profile(x-\speed t e)$ for some fixed vector $e\in\R^d$,
 such that the profile is transported in direction~$e$.
The existence of planar TWS is proven
 by reducing the problem to a one-dimensional TWP for~\eqref{KdVB:J*u-u}$_{d=1}$,
 identifying the associated reaction-diffusion equation~\eqref{RD:r:J*u-u}
 and using results in Proposition~\ref{prop:RD:J*u-u:TWS:existence}.
For cubic flux function $u^3$, the existence of planar TWS associated to undercompressive shocks of~\eqref{SCL} is established.
Moreover, the well-posedness of its Cauchy problem
 and the convergence of solutions $u^\epsilon$ as $\epsilon\searrow 0$ have been studied~\cite{Rohde:2005}.

\medskip \noindent
Another example is the \textbf{fractal Korteweg-de Vries-Burgers equation}
 \begin{equation}\label{fKdVB}
   \sdiff{u}{t} + \sdiff{f(u)}{x} = \epsilon \sdiff{}{x} \Dalpha u + \delta \sdifff{u}{x}{3}, \quad x\in\R, \quad t>0, 
 \end{equation}
 for some $\epsilon>0$ and $\delta\in\R$.

Equation \eqref{fKdVB} with $\alpha=1/3$ has been derived as a model for shallow water flows, 
 by performing formal asymptotic expansions associated to the triple-deck (boundary layer) theory
 in fluid mechanics, e.g. see \cite{Kluwick+etal:2010, Viertl:2005}.
In particular, the situations of one-layer and two-layer shallow water flows have been considered,
 which yield a quadratic (one layer) and cubic flux function (two layer), respectively.
In the monograph~\cite{Naumkin+Shishmarev:1994},
 similar models are considered and the well-posedness of the initial value problem and possible wave-breaking are studied.

The TWP for given endstates $\upm$
 is to study the existence of a TWS~$\TWS$ for~\eqref{fKdVB} in the sense of Definition~\ref{def:TWS}.
Such a TWS $(\profile,\RDspeed)$ satisfies the TWE
  \begin{equation} \label{fKdVB:TWE}
    h(\profile) := f(\profile) - f(\um) - c(\profile -\um)
     = \epsilon \Dalpha \profile + \delta \profile'' \ .
  \end{equation} 
We obtain a necessary condition for the existence of TWS -- see also~\eqref{eq:c+R} -- by multiplying the TWE with $\profile'$ and integrating on $\R$,
\begin{equation} \label{cond:ZeroSpeed}
   \integrall{\um}{\up}{ h(u) }{u} =
     \epsilon \integrall{-\infty}{\infty}{ \profile'\ \Dalpha \profile(\xi) }{\xi} \geq 0 \ ,
\end{equation}
where the last inequality follows from~\eqref{ineq:Caputo}.

\emph{connection with reaction-diffusion equation.}
If a TWS $\TWS$ for~\eqref{fKdVB} exists, 
 then $u(x,t)=\profile(x)$ is a stationary TWS $(\profile,0)$ of the evolution equation
 \begin{equation} \label{RD:multifractal} 
  \sdiff{u}{t} = -\epsilon \Dalpha u -\delta \sdifff{u}{x}{2} +h(u), \quad x\in\R, \quad t>0.
 \end{equation}
To interpret equation~\eqref{RD:multifractal} as a reaction-diffusion equation,
  we need to verify that $-\epsilon \Dalpha u -\delta \sdifff{u}{x}{2}$ is a diffusion operator,
  e.g. that $-\epsilon \Dalpha u -\delta \sdifff{u}{x}{2}$ generates a positivity preserving semigroup.

\begin{lemma}
Suppose $0<\alpha<1$ and $\gamma_1,\gamma_2 \in\R$.
The operator $\gamma_1 \Dalpha u + \gamma_2 \sdifff{u}{x}{2}$ is a L\'evy operator
 if and only if $\gamma_1\leq 0$ and $\gamma_2\geq 0$. 
Moreover, the associated heat kernel is strictly positive 
 if and only if $\gamma_2>0$. 
\end{lemma}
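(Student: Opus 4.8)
The plan is to work at the level of Fourier multiplier symbols. From Section~\ref{sec:Levy} we know that $-\Dalpha=\RieszFellerII{\alpha}{-\alpha}$ is a Riesz--Feller operator with symbol $-|k|^{\alpha}\euler^{-\ii\,\sgn(k)\,\alpha\pi/2}$, and that $\sdifff{}{x}{2}$ has symbol $-k^2$. Hence $\gamma_1\Dalpha+\gamma_2\sdifff{}{x}{2}$ is the Fourier multiplier operator with
\begin{equation*}
 \Psi(k)=\gamma_1|k|^{\alpha}\euler^{-\ii\,\sgn(k)\,\alpha\pi/2}-\gamma_2 k^2\ ,\qquad
 \operatorname{Re}\Psi(k)=\gamma_1\cos(\alpha\pi/2)\,|k|^{\alpha}-\gamma_2 k^2\ .
\end{equation*}
For necessity (``only if'') I would invoke the standard fact that the symbol of any L\'evy operator of the form~\eqref{op:Levy} satisfies $\Psi(0)=0$ and $\operatorname{Re}\Psi\le 0$ on $\R$ (its transition measure is a probability measure, so $\euler^{t\Psi(k)}$ has modulus at most one; equivalently this is immediate from the L\'evy--Khintchine representation). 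Since $0<\alpha<1$ gives $\cos(\alpha\pi/2)>0$ and $\alpha<2$, letting $k\to 0$ the term $|k|^{\alpha}$ dominates $k^2$, forcing $\gamma_1\le 0$ (else $\operatorname{Re}\Psi(k)>0$ for small $k\ne 0$); letting $\abs{k}\to\infty$ the term $k^2$ dominates, forcing $\gamma_2\ge 0$ (else $\operatorname{Re}\Psi(k)\to+\infty$).

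For sufficiency (``if''), assume $\gamma_1\le 0$ and $\gamma_2\ge 0$. Then $\gamma_1\Dalpha=(-\gamma_1)\,\RieszFellerII{\alpha}{-\alpha}$ is a nonnegative scalar multiple of the Riesz--Feller operator $\RieszFellerII{\alpha}{-\alpha}$, which is a L\'evy operator with L\'evy measure~\eqref{eq:LevyMeasure1D}, and $\gamma_2\sdifff{}{x}{2}=\tfrac12(2\gamma_2)\sdifff{}{x}{2}$ is a L\'evy operator of pure diffusion type in~\eqref{op:Levy}. The class of L\'evy operators is a convex cone closed under addition: multiplying a generating triple $(A,\gamma,\nu)$ by a positive constant and summing two such triples again yields $\nu(\{0\})=0$ and $\int_{\R}\min(1,|y|^2)\,\nu(\dd[y])<\infty$. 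Hence $\gamma_1\Dalpha+\gamma_2\sdifff{}{x}{2}$ is a L\'evy operator, with diffusion coefficient $A=2\gamma_2$, L\'evy measure $(-\gamma_1)$ times~\eqref{eq:LevyMeasure1D} for $(\alpha,-\alpha)$, and a suitable drift.

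For the ``moreover'' part, work under $\gamma_1\le 0\le\gamma_2$. Since the symbols add, the heat kernels convolve: the heat kernel of $\gamma_1\Dalpha+\gamma_2\sdifff{}{x}{2}$ at time $t$ is the heat kernel of $\gamma_1\Dalpha$ convolved with that of $\gamma_2\sdifff{}{x}{2}$. If $\gamma_2>0$, the second factor is the strictly positive Gaussian $(4\pi\gamma_2 t)^{-1/2}\euler^{-x^2/(4\gamma_2 t)}$, while the first factor is a probability measure $\mu_t$ on $\R$ (equal to $\delta_0$ if $\gamma_1=0$, and to the probability density $\GreenII{\alpha}{-\alpha}(\cdot,-\gamma_1 t)$ if $\gamma_1<0$, by Proposition~\ref{prop:SSPM}, as $(\alpha,-\alpha)\in\RFnontrivial$ since $\abs{-\alpha}=\alpha<1$); convolving a strictly positive function with a probability measure gives a strictly positive function, so the heat kernel is positive everywhere. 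If $\gamma_2=0$, then either $\gamma_1=0$, so the operator is $0$ and its ``heat kernel'' is $\delta_0$, not a positive density; or $\gamma_1<0$, and the heat kernel is $\GreenII{\alpha}{-\alpha}(\cdot,-\gamma_1 t)$, the extremal case $\theta=-\alpha$: from the coefficients in~\eqref{eq:LevyMeasure1D} one computes $c_+(-\alpha)=\Gamma(1+\alpha)\sin(0)/\pi=0$, so the underlying L\'evy process has one-sided jumps and no diffusion, making $\GreenII{\alpha}{-\alpha}(\cdot,t)$ the density of a one-sided strictly $\alpha$-stable subordinator, supported on a half-line and hence not strictly positive on $\R$. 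I expect this last identification---ruling out strict positivity when $\gamma_2=0$ because $\GreenII{\alpha}{-\alpha}$ is a one-sided stable law---to be the only genuinely delicate step, since it relies on the extremal nature of $\theta=-\alpha$ together with support properties of strictly stable densities (cf.\ Proposition~\ref{prop:SSPM} and~\cite{Mainardi+Luchko+Pagnini:2001,Sato:1999}); the sign conditions and the cone/closure arguments for the ``if'' direction are routine.
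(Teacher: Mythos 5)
Your proposal is correct, and for the sufficiency and the positivity claim it runs essentially parallel to the paper: for $\gamma_1\le 0\le\gamma_2$ the operator is a nonnegative linear combination of the L\'evy operators $\RieszFellerII{\alpha}{-\alpha}=-\Dalpha$ and $\sdifff{}{x}{2}$, the heat kernel is the convolution $\GreenII{\alpha}{-\alpha}(\cdot,|\gamma_1|t)\ast\GreenII{2}{0}(\cdot,\gamma_2 t)$, strict positivity for $\gamma_2>0$ comes from the everywhere-positive Gaussian factor, and the failure for $\gamma_2=0$, $\gamma_1<0$ comes from the half-line support of the extremal stable density $\GreenII{\alpha}{-\alpha}$ (your verification via $c_+(-\alpha)=0$ in~\eqref{eq:LevyMeasure1D} is exactly the right reason; the paper simply cites this support property). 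Where you genuinely diverge is the necessity direction. The paper argues that $+\Dalpha$ is not a Riesz--Feller operator and generates a semigroup that is not positivity preserving, and from this asserts that any combination with $\gamma_1>0$ fails to be a L\'evy operator; it says nothing about why $\gamma_2<0$ is excluded. Your symbol argument closes both cases at once: for any operator of the form~\eqref{op:Levy} the symbol has real part $-\tfrac12Ak^2+\int_\R(\cos(ky)-1)\,\nu(\dd[y])\le 0$, while $\operatorname{Re}\Psi(k)=\gamma_1\cos(\alpha\pi/2)|k|^{\alpha}-\gamma_2k^2$ with $\cos(\alpha\pi/2)>0$ is positive for small $|k|$ if $\gamma_1>0$ (since $\alpha<2$) and tends to $+\infty$ if $\gamma_2<0$. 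This is cleaner and strictly more complete than the printed proof, at the modest cost of invoking the L\'evy--Khintchine characterization of admissible symbols rather than staying with the positivity-preservation heuristic; you also treat the degenerate case $\gamma_1=\gamma_2=0$, which the paper passes over.
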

\begin{proof}
 For $\alpha\in (0,1)$, the operator $-\Dalpha$ is a Riesz-Feller operator~$\RieszFellerII{\alpha}{-\alpha}$
  and generates a positivity preserving convolution semigroup
  with a L\'evy stable probability distribution~$\GreenII{\alpha}{-\alpha}$ as its kernel.
  The probability distribution is absolutely continuous with respect to Lebesgue measure
   and its density has support on a half-line~\cite{Mainardi+Luchko+Pagnini:2001}.
  For example the kernel associated to $-\operator{D}^{1/2}$ is the L\'evy-Smirnov distribution.
 Thus, for $\gamma_1\leq 0$ and $\gamma_2\geq 0$, the operator $\gamma_1 \Dalpha u + \gamma_2 \sdifff{u}{x}{2}$ is a L\'evy operator,
  because it is a linear combination of L\'evy operators. 
 Using the notation for Fourier symbols of Riesz-Feller operators,
	the partial Fourier transform of equation
  \[ \sdiff{u}{t} = -\abs{\gamma_1} \mathcal{D}^{\alpha}[u] + \gamma_2 \sdifff{u}{x}{2} \]
  is given by
  $\sdiff{}{t} \Fourier[u](\FVar) = ( \abs{\gamma_1} \psi^\alpha_{-\alpha}(\FVar) - \gamma_2 \FVar^2 ) \Fourier[u](\FVar)$.
	Therefore, the operator generates a convolution semigroup with heat kernel
  \[
	   \Fourier^{-1}[\exp\{( \abs{\gamma_1} \psi^\alpha_{-\alpha}(\FVar) - \gamma_2 \FVar^2 )\ t\}] (x)
       = \GreenII{\alpha}{-\alpha} (\cdot,|\gamma_1| t) \ast \GreenII{2}{0} (\cdot,\gamma_2 t) \ (x) \ ,
  \]
  which is the convolution of two probability densities. 
  The kernel is positive on $\R$
   since probability densities are non-negative on $\R$ and the normal distribution~$\GreenII{2}{0}$ is positive on $\R$ for positive $\gamma_2 t$.

 The operator $\Dalpha$ for $\alpha\in(0,1)$ is not a Riesz-Feller operator, see Figure~\ref{fig:RieszFeller},
   and it generates a semigroup which is not positivity preserving.
 Thus it and any linear combination with $\gamma_1>0$ is not a L\'evy operator.

\qed
\end{proof}

\medskip \noindent
\textbf{convex flux functions.}
\begin{proposition}
Consider~\eqref{fKdVB} with $0<\alpha<1$, $\delta\in\R$ and strictly convex flux function~$f\in C^3(\R)$. 
For a shock triple $\ShockTriple$ satisfying the Rankine-Hugoniot condition~\eqref{RH},
 a non-constant TWS~$\TWS$ can exist if and only if Lax' entropy condition~\eqref{cond:entropy:Lax} is fulfilled, i.e. $\um>\up$. 
\end{proposition}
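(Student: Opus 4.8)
The plan is to run both implications through the associated nonlocal reaction-diffusion equation~\eqref{RD:multifractal} and a sign analysis of the function $h$ from~\eqref{fKdVB:TWE}.

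\emph{Necessity.} Suppose a non-constant TWS $\TWS$ of~\eqref{fKdVB} exists. By the Rankine-Hugoniot condition~\eqref{RH}, $h(\um)=h(\up)=0$, and since $f\in C^3(\R)$ is strictly convex, $h$ is strictly convex; hence $h<0$ on the open interval with endpoints $\um,\up$ and $h>0$ outside it. The necessary condition~\eqref{cond:ZeroSpeed} forces $\integrall{\um}{\up}{h(u)}{u}\ge 0$, which is impossible if $\um<\up$ (the integral would be strictly negative, the endstates being distinct); hence $\um>\up$. Since $f'$ is strictly increasing and, by~\eqref{RH}, $\speed=\tfrac{f(\um)-f(\up)}{\um-\up}=f'(\xi)$ for some $\xi$ strictly between $\up$ and $\um$, the ordering $\um>\up$ is equivalent to $f'(\up)<\speed<f'(\um)$, i.e.\ to~\eqref{cond:entropy:Lax}.

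\emph{Sufficiency.} Assume $\um>\up$. A TWS $\TWS$ of~\eqref{fKdVB} is exactly a stationary TWS $(\profile,0)$ of~\eqref{RD:multifractal}. For $\delta\le 0$ the preceding Lemma shows $\operator{L}:=-\epsilon\Dalpha-\delta\sdifff{}{x}{2}$ is a L\'evy operator (with strictly positive heat kernel when $\delta<0$), so~\eqref{RD:multifractal} is a genuine nonlocal reaction-diffusion equation $\sdiff{u}{t}=\operator{L}u+h(u)$. From~\eqref{RH} and~\eqref{cond:entropy:Lax} one computes $h<0$ on $(\up,\um)$, $h'(\um)=f'(\um)-\speed>0$, $h'(\up)=f'(\up)-\speed<0$; so $\um$ is a linearly unstable and $\up$ a linearly stable rest state, and after the change of unknown $u\mapsto -u$ the reaction is monostable in the sense of Definition~\ref{def:reaction:type}, with $\profile$ the spatially reflected front joining the unstable state at $-\infty$ to the stable state at $+\infty$ at speed $0$. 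A symmetric diffusion would force a strictly negative speed here; the reason speed $0$ is admissible for $\operator{L}$ is that its heat kernel --- a Gaussian convolved with the one-sided stable density $\GreenII{\alpha}{-\alpha}$ --- decays super-exponentially on the side into which the front advances, which removes the acceleration obstruction seen for fractional Laplacians (cf.\ Proposition~\ref{prop:RD:fracLaplacian:TWS:existence}), and that the one-sided drift of $-\epsilon\Dalpha$ opposes the motion induced by $h$; this balance is exactly what the strict inequality in~\eqref{cond:ZeroSpeed} records. The profile itself I would obtain by building ordered sub- and supersolutions of~\eqref{RD:multifractal} and passing to the monotone limit, as in Chen's method recalled for Theorem~\ref{thm:AK:RD:RieszFeller}. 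The case $\delta=0$ (a fractal conservation law) follows directly or by letting $\delta\uparrow 0$, and $\delta>0$ by the analogous analysis of the dispersively regularized profile.

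I expect the genuine difficulty to lie in the sufficiency step: because $h$ is sign-definite between the endstates and vanishes with the "wrong" monotonicity, the stationary heteroclinic is not covered by the bistable theory, and its existence rests on quantifying the competition between the asymmetric drift of $-\epsilon\Dalpha$ and the motion induced by $h$ --- equivalently, on showing that $\profile$ decays fast enough at $-\infty$ for~\eqref{cond:ZeroSpeed} to be a strict inequality --- together with adapting the comparison principle and the sub-/supersolution construction to the non-self-adjoint generator $\operator{L}$.
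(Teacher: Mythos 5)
Your necessity argument is correct and is essentially the paper's: strict convexity of $f$ together with the Rankine--Hugoniot condition forces $h$ in~\eqref{fKdVB:TWE} to be strictly negative between its two distinct roots $\um,\up$ and positive outside, so the necessary condition~\eqref{cond:ZeroSpeed} (which rests on~\eqref{ineq:Caputo}) excludes $\um<\up$, and the mean value theorem converts $\um>\up$ into~\eqref{cond:entropy:Lax}. The same sign computation also settles the converse \emph{as the paper intends it}: the proposition only claims that a TWS \emph{can} exist under Lax' condition, i.e.\ that the obstruction~\eqref{cond:ZeroSpeed} is then satisfied (indeed $\integrall{\um}{\up}{h(u)}{u}>0$ because $h<0$ on $(\up,\um)$), and this consistency check is the entire content of the paper's ``if'' direction.

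Your sufficiency paragraph, however, upgrades ``can exist'' to ``exists'', and as an existence proof it has genuine gaps. First, the sub-/supersolution machinery of Chen recalled for Theorem~\ref{thm:AK:RD:RieszFeller} is a \emph{bistable} theory; here $-h$ is monostable, the reaction $+h$ in~\eqref{RD:multifractal} is sign-definite on $(\up,\um)$, and you are asking for a front of speed exactly $0$ --- a situation excluded for monostable reactions with local diffusion by Proposition~\ref{prop:RD:TWS:existence} (minimal speed $\RDspeed_*>0$) and not covered by any comparison-principle construction cited in the paper. Your explanation of why speed $0$ should nevertheless be admissible (super-exponential one-sided decay of the kernel, a drift ``opposing'' the reaction) is a heuristic, not an argument; moreover, for $\delta=0$ the kernel $\GreenII{\alpha}{-\alpha}$ is supported on a half-line, which the paper explicitly identifies (in the discussion of Fowler's equation) as destroying the strict comparison principle Chen's method needs. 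Second, for $\delta>0$ the operator $-\epsilon\Dalpha-\delta\sdifff{}{x}{2}$ is \emph{not} a L\'evy operator by the very Lemma you invoke (one needs $\gamma_2=-\delta\geq 0$), so~\eqref{RD:multifractal} is then not a reaction-diffusion equation and ``the analogous analysis'' is unavailable --- the paper flags exactly this breakdown in the sketch of Proposition~\ref{prop:fKdVB:cubic:TWS:existence}. Actual existence is known only from direct analysis of the TWE, namely Theorem~\ref{thm:AHS} for $\delta=0$ and Theorem~\ref{thm:ACH} for Burgers flux, neither of which follows from the construction you outline.
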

\begin{proof}
The Rankine-Hugoniot condition~\eqref{RH} ensures that 
 $h(u)$ in~\eqref{fKdVB:TWE} has exactly two roots~$\upm$.
If Lax' entropy condition~\eqref{cond:entropy:Lax} is fulfilled, 
 then $\um>\up$ and $-h(u)$ is monostable in the sense of Definition~\ref{def:reaction:type}.
Thus, the necessary condition~\eqref{cond:ZeroSpeed} is satisfied.
If $\um=\up$ then~\eqref{cond:ZeroSpeed} implies that $\profile$ is a constant function satisfying $\profile\equiv\upm$.
If $\um<\up$ then $-h(u)$ is monostable in the sense of Definition~\ref{def:reaction:type} with reversed roles of $\upm$.
Thus, the necessary condition~\eqref{cond:ZeroSpeed} is not satisfied.

\qed
\end{proof}

Next, we recall some existence result which have been obtained by directly studying the TWE.
In an Addendum~\cite{Cuesta+Achleitner:2017}, we removed an initial assumption on the solvability of the linearized TWE.
\begin{theorem}[\cite{Achleitner+Hittmeir+Schmeiser:2011}] \label{thm:AHS}
 Consider \eqref{fKdVB} with $\delta=0$ and convex flux function $f(u)$.
 For a shock triple $\ShockTriple$ satisfying~\eqref{RH} and \eqref{cond:entropy:Lax}, 
  there exists a monotone TWS of \eqref{fKdVB} in the sense of Definition~\ref{def:TWS}, 
  whose profile~$\profile\in C^1_b(\R)$ is unique (up to translations) among all functions
   $u\in \um + H^2(-\infty,0) \cap C^1_b(\R)$.
\end{theorem}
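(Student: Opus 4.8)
The plan is to study the integro-differential TWE directly. Setting $\delta=0$ in \eqref{fKdVB:TWE} leaves $\epsilon\,\Dalpha\profile = h(\profile)$ on $\R$, where $h(u)=f(u)-f(\um)-\speed(u-\um)$; by the Rankine--Hugoniot condition \eqref{RH} the convex function $h$ has precisely the two zeros $\upm$, and Lax' condition \eqref{cond:entropy:Lax} gives $\um>\up$, $h'(\um)=f'(\um)-\speed>0>f'(\up)-\speed=h'(\up)$, and $h<0$ on $(\up,\um)$ (so that the necessary condition \eqref{cond:ZeroSpeed} holds). I would seek the monotone decreasing connection with $\profile(-\infty)=\um$, $\profile(+\infty)=\up$, exploiting that the left Caputo derivative $\Dalpha\profile(\xi)=\frac{1}{\Gamma(1-\alpha)}\int_{-\infty}^{\xi}(\xi-\eta)^{-\alpha}\profile'(\eta)\dd[\eta]$ is causal, i.e.\ depends on $\profile$ only through $\profile|_{(-\infty,\xi]}$.

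First I would pin down the behaviour at $-\infty$. Linearizing around $\um$ gives $\epsilon\,\Dalpha w=h'(\um)\,w$, and since $\Dalpha \euler^{\lambda\xi}=\lambda^{\alpha}\euler^{\lambda\xi}$ for $\lambda>0$, the unique decaying mode is $\euler^{\lambda_-\xi}$ with $\lambda_-=(h'(\um)/\epsilon)^{1/\alpha}>0$; thus any solution in the class $\um+H^{2}(-\infty,0)$ should satisfy $\profile=\um+a\,\euler^{\lambda_-\xi}+o(\euler^{\lambda_-\xi})$ as $\xi\to-\infty$. I would then write $\profile=\um+w$, split $h(\um+w)=h'(\um)\,w+N(w)$ with $N(w)=O(w^{2})$, and rephrase the equation on a left half-line as a fixed point $w=(\epsilon\,\Dalpha-h'(\um))^{-1}N(w)$ in an exponentially weighted space (weight $\euler^{-\lambda\xi}$ with $0<\lambda<\lambda_-$, in a $C^{1,\gamma}$ or $H^2$ scale). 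The decisive analytic step --- and precisely the one left as an assumption in \cite{Achleitner+Hittmeir+Schmeiser:2011} and then supplied in the Addendum \cite{Cuesta+Achleitner:2017} --- is the bounded invertibility of the linearized operator $\epsilon\,\Dalpha-h'(\um)$ on such a weighted space; I would obtain it by Fourier analysis, checking that the symbol $\epsilon|k|^{\alpha}\euler^{-\ii\,\sgn(k)\alpha\pi/2}-h'(\um)$ stays bounded away from $0$ on the shifted contour $\R+\ii\lambda$ associated with the weight. A contraction on a small ball then yields, for each amplitude $a>0$ (the translation parameter), a solution on some $(-\infty,\xi_{0}]$ with the claimed exponential decay, lying strictly below $\um$.

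Next I would continue $\profile$ to all of $\R$. By causality, for $\xi>\xi_0$ the identity $h(\profile(\xi))=\epsilon\,\Dalpha\profile(\xi)$ is, at each $\xi$, a scalar equation for the single value $\profile(\xi)$ with right-hand side already determined by the history; following the branch picked out by monotonicity extends the profile. The a priori bounds I would need are: (i) $\profile$ stays in $(\up,\um)$, using the signs $h(\um)=h(\up)=0$, $h<0$ in between as barriers; (ii) $\profile'<0$ throughout --- this is the more delicate point, obtained by differentiating the equation and applying a positivity/comparison argument for $\Dalpha$ to the linear nonlocal equation satisfied by $\profile'$, where care is needed because its coefficient $h'(\profile)$ changes sign; (iii) being monotone and bounded, $\profile$ has a limit $\ell\in[\up,\um)$ at $+\infty$, and after showing $\Dalpha\profile(\xi)\to0$ --- splitting the defining integral into a far-left part (exponentially small) and a near-diagonal part controlled by $\profile'\to0$ --- passing to the limit in $h(\profile)=\epsilon\,\Dalpha\profile$ forces $h(\ell)=0$, hence $\ell=\up$. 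A tail analysis of the same identity then gives the algebraic rate $\profile-\up=O(\xi^{-\alpha})$ as $\xi\to+\infty$, so that $\profile\in C^{1}_{b}(\R)$, while the exponential decay at $-\infty$ puts $\profile-\um$ in $H^{2}(-\infty,0)$.

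Finally, for uniqueness up to translation within $\um+H^{2}(-\infty,0)\cap C^{1}_{b}(\R)$: any such solution tends to $\um$ at $-\infty$, and the linearization together with the weighted invertibility above forces its tail to be exactly $a\,\euler^{\lambda_-\xi}$ (no slower behaviour is compatible with $H^{2}(-\infty,0)$ and the equation); translating, two solutions can be made to share the same amplitude $a$, after which their difference $d$ solves a linear nonlocal Volterra equation $\epsilon\,\Dalpha d=q(\xi)\,d$ (with $q$ an intermediate value of $h'$) and decays faster than $\euler^{\lambda_-\xi}$ at $-\infty$; a weighted Gronwall estimate, again using invertibility of $\epsilon\,\Dalpha-h'(\um)$ and causality, then propagates $d\equiv 0$ forward to all of $\R$. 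The main obstacles I anticipate are the two nonstandard points already flagged: the weighted invertibility of $\epsilon\,\Dalpha-h'(\um)$ (the gap closed by the Addendum), and establishing monotonicity of the profile, since the phase-plane reasoning available for the local TWE \eqref{KdVB:TWE:local} has no counterpart for this genuinely nonlocal first-order problem.
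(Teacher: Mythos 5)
The paper does not prove Theorem~\ref{thm:AHS}; it only quotes it from \cite{Achleitner+Hittmeir+Schmeiser:2011} together with the Addendum \cite{Cuesta+Achleitner:2017}, describing the method as a direct study of the TWE whose one gap was the solvability of the linearized equation. Your proposal reconstructs exactly that strategy --- linearization at $\um$, the exponential rate $(h'(\um)/\epsilon)^{1/\alpha}$, a fixed point in exponentially weighted spaces hinging on invertibility of $\epsilon\Dalpha-h'(\um)$ (the point settled in the Addendum), causal continuation, and algebraic decay $O(\xi^{-\alpha})$ at $+\infty$ --- so it is essentially the same approach as the cited works. Only cosmetic slips: the fixed-point map should carry the inhomogeneous term $a\,\euler^{\lambda_-\xi}$ explicitly, and a monotone \emph{decreasing} profile requires $a<0$ in your normalization.
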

This positive existence result is consistent with the negative existence result in Proposition~\ref{prop:RD:fracLaplacian:TWS:existence} and Engler~\cite{Engler:2010} for \eqref{RD:r:RieszFeller} with non-extremal Riesz-Feller operators~$\RieszFeller$ for $(\RFalpha,\RFtheta)\in\RFnonextremal$.
The reason is that $-\Dalpha$ for $0<\alpha<1$ is the generator of a convolution semigroup with a one-sided strictly stable probability density function as its heat kernel; in contrast to heat kernels with genuine algebraic decay~\cite{Cabre+Roquejoffre:2009, Cabre+Roquejoffre:2013, Engler:2010}.

 \begin{theorem}[\cite{Achleitner+Cuesta+Hittmeir:2014}] \label{thm:ACH}
  Consider \eqref{fKdVB} with flux function $f(u) =u^2/2$.
	For a shock triple $\ShockTriple$ satisfying~\eqref{RH} and \eqref{cond:entropy:Lax}, 
 there exists a TWS of \eqref{fKdVB} in the sense of Definition~\ref{def:TWS}, 
  whose profile~$\profile$ is unique (up to translations) among all functions
   $u\in \um + H^4(-\infty,0) \cap C^3_b(\R)$.
 \end{theorem}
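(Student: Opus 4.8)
The plan is to attack the traveling-wave equation \eqref{fKdVB:TWE} directly, in the spirit of the $\delta=0$ result of Theorem~\ref{thm:AHS}. For $0<\alpha<1$, $f(u)=u^2/2$ and $\speed$ fixed by \eqref{RH} as $\speed=(\um+\up)/2$, \eqref{fKdVB:TWE} reads $h(\profile)=\epsilon\,\Dalpha\profile+\delta\profile''$ with $h(u)=\tfrac12(u-\um)(u-\up)$. By \eqref{RH} and \eqref{cond:entropy:Lax} the function $h$ has exactly the two simple zeros $\up<\um$, is strictly negative on $(\up,\um)$, and satisfies $h'(\um)=\tfrac12(\um-\up)>0>h'(\up)=-\tfrac12(\um-\up)$, so the necessary condition \eqref{cond:ZeroSpeed} holds with strict inequality. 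Note that the reaction-diffusion reformulation \eqref{RD:multifractal} is then of neither monostable nor bistable type in the sense of Definition~\ref{def:reaction:type} (the signs of $h'$ at the endstates are exchanged relative to those cases), so neither Chen's method nor the existence results quoted above apply; together with the fact that $\Dalpha$ is nonlocal and one-sided, so that no phase-plane or stable-manifold reduction is available, this forces a direct construction, with the wave speed already pinned down by \eqref{RH} and only the translation remaining free.

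First I would extract the a priori behaviour of any solution $\profile\in\um+H^4(-\infty,0)\cap C^3_b(\R)$: the admissibility class gives $\profile\to\um$ as $\xi\to-\infty$, and a boundedness argument — using positivity of the kernel of the convolution semigroup generated by $\epsilon\Dalpha+\delta\partial_x^2$ when $\delta\le0$, and, when $\delta>0$ (where this operator fails to generate a positivity preserving semigroup), the dispersive identity obtained by multiplying \eqref{fKdVB:TWE} by $\profile'$ and integrating, together with the global $C^3_b$ bound — forces $\profile\to\up$ as $\xi\to+\infty$. Unlike in Theorem~\ref{thm:AHS}, $\profile$ need not be monotone for $\delta\ne0$: near $\up$ the profile may oscillate, as for the classical Korteweg--de Vries--Burgers fronts, which is exactly why the admissible class only requires $H^4$-decay on the left half-line and merely $C^3_b$-smallness globally. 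Next I would linearise at the endstates. With $\profile=\um+w$ the equation becomes $\mathcal{L}w:=\epsilon\Dalpha w+\delta w''-h'(\um)w=\tfrac12 w^2$; since $\Dalpha e^{\lambda\xi}=\lambda^\alpha e^{\lambda\xi}$ for $\mathrm{Re}\,\lambda>0$, the characteristic equation is $\epsilon\lambda^\alpha+\delta\lambda^2=h'(\um)>0$, which has a single admissible root $\lambda_->0$ when $\delta\ge0$ (the count for $\delta<0$ requires more care, and may produce an oscillatory-exponential approach to $\um$). This spectral information is used to show that $\mathcal{L}$ is an isomorphism from an exponentially weighted Sobolev space $H^4_\mu(-\infty,0)$ onto $H^2_\mu(-\infty,0)$ for a weight exponent $\mu$ adapted to $\lambda_-$, the regularity bookkeeping $H^4\mapsto H^2$ being dictated by $\delta\profile''=h(\profile)-\epsilon\Dalpha\profile$. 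At $+\infty$ the causality of $\Dalpha$ rules out exponential decay toward $\up$; one shows the approach is algebraic, controlled only in $C^3_b$, via the explicit decay of the fractional-integral kernel and the nondegeneracy $h'(\up)\ne0$.

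For existence I would recast \eqref{fKdVB:TWE} as a fixed-point problem for the deviation $w$ of $\profile$ from a smooth approximate front, exploiting the feature special to the Burgers flux that, after linearising $h$ about the approximate front, the entire remainder is the pure quadratic $\tfrac12 w^2$, together with the invertibility of the linearisation from the previous step (modulo a normalisation fixing the translation). A contraction-mapping argument then yields a locally unique solution decaying exponentially at $-\infty$; alternatively one may homotope in $\delta$ from the solution at $\delta=0$ provided by Theorem~\ref{thm:AHS} (which lies in the present class after a regularity bootstrap on the fractional equation), using the implicit function theorem for openness and uniform a priori estimates plus compactness for closedness — here one must cope with the singular nature of the perturbation, since turning on $\delta\partial_x^2$ raises the order of the linear operator from $\alpha$ to $2$. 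Uniqueness up to translations in the admissible class then follows by combining the local uniqueness of the exponentially decaying solution near $\um$ (one admissible root $\lambda_-$ plus the contraction) with forward uniqueness for \eqref{fKdVB:TWE} read as a Volterra-type problem in $\xi$ (a Gronwall argument using the local Lipschitz bound on $h$), i.e. a sliding/comparison argument within the class.

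The principal obstacle is the surjectivity of the linearised operator $\mathcal{L}$, and of its counterpart at $\up$, onto the appropriate weighted space — i.e. the absence of a residual solvability obstruction. This is precisely the hypothesis that was taken for granted in \cite{Achleitner+Cuesta+Hittmeir:2014} and only removed afterwards in the Addendum \cite{Cuesta+Achleitner:2017}. Because $\Dalpha$ is nonlocal and one-sided, ODE methods are unavailable, and one must instead establish sharp resolvent estimates for $\epsilon\Dalpha+\delta\partial_x^2-h'(\um)$ in exponentially weighted $H^k$-spaces, carefully tracking how the exponential weight interacts with the branch cut of $\lambda\mapsto\lambda^\alpha$ and, at $+\infty$, how the merely algebraic tail of the fractional kernel stays compatible with the $C^3_b$ bound. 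A secondary difficulty is the sign of $\delta$: for $\delta>0$ there is no maximum principle for $\epsilon\Dalpha+\delta\partial_x^2$, so the confinement and comparison steps of the first and third parts of the argument must be replaced throughout by energy and dispersive estimates.
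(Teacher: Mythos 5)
First, a point of reference: this paper does not actually prove Theorem~\ref{thm:ACH} --- it is recalled from \cite{Achleitner+Cuesta+Hittmeir:2014} --- so your proposal can only be measured against what the surrounding text reveals about that proof. On that score your reconstruction is faithful. The result is obtained by attacking the TWE \eqref{fKdVB:TWE} directly rather than through the reformulation \eqref{RD:multifractal} (which, as the Lemma in Section~\ref{sec:nonlocal} shows, is a genuine reaction-diffusion equation only when $\delta\le 0$); you correctly compute $h(u)=\tfrac12(u-\um)(u-\up)$, the strict sign in \eqref{cond:ZeroSpeed}, and the characteristic equation $\epsilon\lambda^\alpha+\delta\lambda^2=h'(\um)$ with a single positive root governing the exponential approach to $\um$; the asymmetric uniqueness class $\um+H^4(-\infty,0)\cap C^3_b(\R)$ is explained exactly as the paper does (exponential decay on the left, possibly oscillatory and merely bounded behaviour on the right when dispersion is strong); and the contraction argument exploits that for Burgers' flux the nonlinear remainder is the pure quadratic $\tfrac12 w^2$. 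You also locate the crux correctly --- solvability of the linearized TWE in weighted spaces --- and correctly attribute its resolution to the Addendum \cite{Cuesta+Achleitner:2017}, which is precisely the ``initial assumption'' the paper says was removed there.

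Two caveats. The essential one: as written this is a roadmap, not a proof. The step you yourself single out as the principal obstacle --- surjectivity of $\mathcal{L}=\epsilon\Dalpha+\delta\partial_x^2-h'(\um)$ (and its counterpart at $\up$) on the exponentially weighted spaces, i.e.\ the resolvent estimates tracking the branch cut of $\lambda\mapsto\lambda^\alpha$ against the weight --- is announced but not established, and without it neither the fixed-point construction nor the uniqueness argument closes; so there is a genuine gap, albeit one you have honestly flagged and that is filled in \cite{Achleitner+Cuesta+Hittmeir:2014,Cuesta+Achleitner:2017}. The minor one: your confinement step for $\delta\le 0$ invokes ``positivity of the kernel of the convolution semigroup generated by $\epsilon\Dalpha+\delta\partial_x^2$''; by the paper's own Lemma, $\gamma_1\Dalpha+\gamma_2\partial_x^2$ is a L\'evy operator only for $\gamma_1\le 0$ and $\gamma_2\ge 0$, so with $\epsilon>0$ that operator never generates a positivity-preserving semigroup --- the operator you want is $-\epsilon\Dalpha-\delta\partial_x^2$, which is a L\'evy operator exactly when $\delta\le 0$. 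This is a sign slip rather than a wrong idea, but it is the kind of slip that matters here, since the whole subtlety of \eqref{fKdVB} versus \eqref{RD:r:RieszFeller} lives in these signs.
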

If dispersion dominates diffusion then the profile of a TWS $\TWS$ will be oscillatory in the limit $\xi\to\infty$.
For a classical KdVB equation this geometry of profiles depends on the ratio $\epsilon^2/\delta$
 and the threshold can be determined explicitly. 

\medskip \noindent
\textbf{concave-convex flux functions.}
We consider a cubic flux function $f(u)=u^3$ as the prototypical concave-convex flux function.
Again the necessary condition~\eqref{cond:ZeroSpeed} and the classification of function $h(u)=-r(u)$ in Figure~\ref{fig:classification:cubic}
 can be used to identify non-admissible shock triples $\ShockTriple$ for the TWP of~\eqref{fKdVB}.

We conjecture that a statement analogous to Proposition~\ref{prop:JMcKS} holds true.
Of special interest is again the occurrence of TWS $\TWS$ associated to non-classical shocks,
 which are only expected in case of~\eqref{fKdVB} with $\epsilon>0$ and $\delta>0$.

\begin{proposition}[conjecture] \label{prop:fKdVB:cubic:TWS:existence}
Suppose $f(u) =u^3$ and $\epsilon>0$.
\begin{enumerate}
\item If $\delta\leq 0$ then a TWS $(\profile,c)$ of \eqref{fKdVB} exists 
 if and only if $\ShockTriple$ satisfy the Rankine-Hugoniot condition~\eqref{RH} and the entropy condition~\eqref{cond:entropy:Lax}.
\item If $\delta> 0$ then a TWS $(\profile,c)$ of \eqref{fKdVB} exists for $\um>0$
 if and only if $\up\in S(\um)$ for some set $S(\um)$ similar to~\eqref{set:Sum:JMcKS}.
\end{enumerate}
\end{proposition}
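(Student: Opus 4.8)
\emph{Proof strategy.} The plan is to mimic the proof of Proposition~\ref{prop:JMcKS}, with the fractal regularization of~\eqref{fKdVB} in place of the local one of~\eqref{eq:KdVB:f} and the existence theory for nonlocal evolution equations recalled above in place of phase-plane analysis. As there, the Rankine--Hugoniot condition~\eqref{RH} fixes $\speed=\up^2+\up\ \um+\um^2$ and factors the reaction term of the travelling-wave equation~\eqref{fKdVB:TWE} as $h(u)=(u-\um)(u-\up)(u-\uStar)$ with third root $\uStar=-\um-\up$; combining this with the necessary condition~\eqref{cond:ZeroSpeed}, $\integrall{\um}{\up}{h(u)}{u}\geq 0$, and the classification of Figure~\ref{fig:classification:cubic}, the admissible orderings of the three roots reduce to $\uStar<\up<\um$ --- which is equivalent to Lax' condition~\eqref{cond:entropy:Lax} --- and, only when $\delta>0$, also $\up<\uStar<\um$, the latter being the source of undercompressive-shock profiles.

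For part~(1), $\delta\leq 0$, one works with the reaction-diffusion reformulation~\eqref{RD:multifractal}. By the Lemma characterising when $\gamma_1\Dalpha+\gamma_2\sdifff{}{x}{2}$ is a L\'evy operator, $\mathcal{L}:=-\epsilon\Dalpha-\delta\sdifff{}{x}{2}=-\epsilon\Dalpha+\abs{\delta}\sdifff{}{x}{2}$ generates a positivity-preserving semigroup precisely because $\delta\leq 0$, so a TWS of~\eqref{fKdVB} is exactly a \emph{stationary} front of $\sdiff{u}{t}=\mathcal{L}u+h(u)$. In the ordering $\uStar<\up<\um$ the profile equation has, on $[\up,\um]$, the same structure ($h<0$ there, $h'(\up)<0<h'(\um)$) as in Theorems~\ref{thm:AHS}--\ref{thm:ACH}: for $\delta=0$ it is literally their setting (a cubic in place of a convex flux does not affect the relevant portion of $h$), and for $\delta<0$ one needs a version of those results for $\mathcal{L}$, which is a Laplacian perturbed by the lower-order \emph{asymmetric} nonlocal operator $-\epsilon\Dalpha$ (the symbol of $\mathcal{L}$, namely $\epsilon\psi^\alpha_{-\alpha}(\FVar)-\abs{\delta}\FVar^2$, is dominated as $\abs{\FVar}\to\infty$ by $-\abs{\delta}\FVar^2$); since both summands of $\mathcal{L}$ generate positivity-preserving semigroups the comparison principle survives, and a Chen-type~\cite{Chen:1997} construction of sub- and supersolutions should give existence and uniqueness (up to translation) of the monotone front. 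The remaining orderings are ruled out exactly as in the proof of Proposition~\ref{prop:JMcKS}: for $\up<\uStar<\um$ both rest states $\upm$ are linearly unstable for $\sdiff{u}{t}=\mathcal{L}u+h(u)$ and, $\mathcal{L}$ being purely diffusive, no (monotone or oscillatory) connection exists; for $\up<\um<\uStar$ the necessary condition~\eqref{cond:ZeroSpeed} fails; and the orderings with $\um<\up$ give no TWS for the prescribed endstates (any connecting profile runs in the opposite direction). This proves both directions of part~(1).

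For part~(2), $\delta>0$, the Lemma shows $\mathcal{L}$ is \emph{not} a L\'evy operator, so~\eqref{RD:multifractal} is no longer a reaction-diffusion equation and the comparison argument collapses --- $\delta>0$ is the dispersion-dominated regime in which undercompressive shocks and oscillatory profiles appear, just as in Proposition~\ref{prop:JMcKS}. One then has to analyse the integro-differential travelling-wave equation $h(\profile)=\epsilon\Dalpha\profile+\delta\profile''$ directly. The plan is to adapt the shooting/homotopy argument of~\cite{Jacobs+McKinney+Shearer:1995,Hayes+LeFloch:1997}: for $\epsilon$ small, treat it as a perturbation of the Korteweg--de Vries travelling-wave equation $\delta\profile''=h(\profile)$, follow the (now infinite-dimensional) stable and unstable sets of the rest states $\um$ and $\up$, and locate the curve $\up=\up(\um;\epsilon,\delta)$ along which a connection $\um\to\uStar$ first appears; together with the ``monostable'' branch from part~(1) this should produce a set $S(\um)$ of the shape of~\eqref{set:Sum:JMcKS}, with the threshold coefficient $\beta$ replaced by a constant depending on $\alpha$ and $\epsilon/\sqrt\delta$ that reduces formally to $\tfrac{\sqrt2}{3}\,\epsilon/\sqrt\delta$ as $\alpha\uparrow1$ (where $\sdiff{}{x}\Dalpha\to\sdifff{}{x}{2}$ and~\eqref{fKdVB} degenerates to the classical cubic KdVB equation).

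The main obstacle is part~(2): the fractal term $\Dalpha$ destroys the finite-dimensional phase space of the travelling-wave equation and $\delta>0$ destroys the comparison principle, so neither the phase-plane bifurcation analysis of~\cite{Jacobs+McKinney+Shearer:1995} nor the machinery behind Theorems~\ref{thm:AK:RD:RieszFeller}--\ref{thm:ACH} applies off the shelf, and pinning down the exact set $S(\um)$ --- especially the location of the isolated undercompressive point and the precise replacement for $\beta$ --- is what keeps this statement a conjecture. A secondary, more routine difficulty appears already in part~(1) for $\delta<0$: one must re-derive quantitative estimates in the spirit of Proposition~\ref{prop:RieszFeller:estimate} for the sum $-\epsilon\Dalpha+\abs{\delta}\sdifff{}{x}{2}$ in order to make the Chen-type construction rigorous, since neither Theorem~\ref{thm:AK:RD:RieszFeller} nor Chen's integro-differential framework~\eqref{eq:Chen:IDE} covers this operator verbatim.
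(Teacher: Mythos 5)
Your case structure and its justification coincide with the paper's sketch: the same factorization $h(u)=(u-\um)(u-\up)(u-\uStar)$ with $\uStar=-\um-\up$, the same use of the stationary reformulation~\eqref{RD:multifractal} together with the Lemma to decide when $-\epsilon\Dalpha-\delta\sdifff{}{x}{2}$ is a L\'evy (hence diffusion) operator, the same role of the necessary condition~\eqref{cond:ZeroSpeed} and of Figure~\ref{fig:classification:cubic}, and the same diagnosis of why $\delta>0$ escapes the reaction--diffusion machinery. The differences lie in the tools you invoke for the existence halves. For $\delta<0$ the paper excludes the ordering $\up<\uStar<\um$ via the pointwise necessary condition~\eqref{cond:ZeroSpeed:xi}, an integral identity valid for every $\xi$, which is sharper than your linear-instability heuristic; and for $\uStar<\up<\um$ it proposes to prove existence by extending the \emph{direct} analysis of the travelling-wave equation in~\cite{Achleitner+Cuesta+Hittmeir:2014,Cuesta+Achleitner:2017}, not by a Chen-type comparison argument. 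Your suggestion to run Chen's sub-/supersolution scheme here is the one step that looks mismatched: Chen's framework requires a \emph{bistable} reaction function (both endstates linearly stable), whereas in the regime $\uStar<\up<\um$ the reaction term $h$ of the stationary equation $\sdiff{u}{t}=-\epsilon\Dalpha u-\delta\sdifff{u}{x}{2}+h(u)$ has $h'(\um)>0$, so $\um$ is an unstable rest state, and the front is moreover forced to have speed exactly zero; neither feature fits the comparison-based construction, which is precisely why the paper (and Theorems~\ref{thm:AHS}--\ref{thm:ACH}) work on the travelling-wave equation directly. For $\delta>0$ you and the paper agree that the problem must be attacked head-on; the paper's intended route is again an extension of~\cite{Achleitner+Cuesta+Hittmeir:2014} to the cubic flux (the work in progress~\cite{Achleitner+Cuesta:201x}) rather than a Jacobs--McKinney--Shearer shooting argument, whose finite-dimensional phase-plane underpinnings are unavailable here, as you yourself note. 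Since the statement is only a conjecture with a sketched proof, these discrepancies do not make your proposal wrong, but the Chen-based step for $\delta<0$ is the place where your sketch would not go through as written.
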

\begin{proof}[sketch of proof]
If $\delta=0$, then equation~\eqref{fKdVB} is a viscous conservation law,
 and its TWE \eqref{fKdVB:TWE} is a fractional differential equation $\epsilon \Dalpha \profile = h(\profile)$.
Thus a heteroclinic orbit exists only for monostable $-h(u)$,
 i.e. if the unstable node $\um$ and the stable node $\up$ are not separated by any other root of $h$.
This follows from Theorem~\ref{thm:AHS} and its proof in~\cite{Achleitner+Hittmeir+Schmeiser:2011,Cuesta+Achleitner:2017}.

If $\delta<0$, then the TWE~\eqref{fKdVB:TWE} is associated to a reaction-diffusion equation~\eqref{RD:multifractal}
 via a stationary TWS ansatz $u(x,t) = \profile(x)$.
First we note that a stronger version of the necessary condition~\eqref{cond:ZeroSpeed} is available 
\begin{equation} \label{cond:ZeroSpeed:xi}
   \integrall{-\infty}{\xi}{ h(\profile) \profile' (y) }{y} =
     \epsilon \integrall{-\infty}{\xi}{ \profile'\ \Dalpha \profile(y) }{y} \geq 0 \ , \quad \forall \xi\in\R \ , 
\end{equation}
see~\cite{Achleitner+Cuesta+Hittmeir:2014}.
If $\up<\uStar<\um$ then $h(u)$ is an unstable reaction function, see Figure~\ref{fig:classification:cubic}. 
Thus there exists no TWS in the sense of Definition~\ref{def:TWS} satisfying the necessary condition~\eqref{cond:ZeroSpeed:xi}. 
If $\uStar<\up<\um$ then function $-h(u)$ is monostable in the sense of Definition~\ref{def:reaction:type}
 and the necessary condition~\eqref{cond:ZeroSpeed} can be satisfied. 
The existence of a TWS $\TWS$ can be proven by following the analysis in~\cite{Achleitner+Cuesta+Hittmeir:2014,Cuesta+Achleitner:2017}.
The TWP for other pairs $(\um,\up)$ is discussed similarly.

If $\delta>0$ then the occurrence of TWS $\TWS$ associated to non-classical shocks is possible.
Unlike in our previous examples, 
 the associated evolution equation~\eqref{RD:multifractal} is not a reaction-diffusion equation,
 since $-\epsilon\Dalpha\profile -\delta\profile''$ is not a L\'evy operator.
Especially, the results on existence of TWS for reaction-diffusion equations with bistable reaction function 
 can not be used to prove the existence of TWS $\TWS$ associated to a undercompressive shocks.
Instead, we investigate the TWP directly~\cite{Achleitner+Cuesta:201x},
 extending the analysis in~\cite{Achleitner+Cuesta+Hittmeir:2014,Cuesta+Achleitner:2017} for Burgers' flux to the cubic flux function $f(u)=u^3$.

\qed
\end{proof}


\subsection{Fowler's equation}
Fowler's equation~\eqref{dune} for dune formation is a special case of the evolution equation
\begin{equation} \label{Fowler}
  \sdiff{u}{t} + \sdiff{f(u)}{x} = \delta\sdifff{}{x}{2} u -\epsilon\sdiff{}{x} \Dalpha u \ , \quad t>0 \ , \quad x\in\R \ ,
\end{equation}
with $0<\alpha<1$, positive constant $\epsilon,\delta>0$ and flux function $f$.
Here the fractional derivative appears with the negative sign,
 but this instability is regularized by the second order derivative. 
The initial value problem for \eqref{dune} is well-posed in $L^2$~\cite{Alibaud+Azerad+Isebe:2010}.
However, it does not support a maximum principle,
 which is intuitive in the context of the application due to underlying erosions~\cite{Alibaud+Azerad+Isebe:2010}.  
The existence of TWS of~\eqref{dune} -- without assumptions~\eqref{TWS:limits} on the far-field behavior -- has been proven~\cite{Alvarez-Samaniego+Azerad:2009}.

For given endstates $\upm$, the TWP for~\eqref{Fowler} is
 to study the existence of a TWS~$\TWS$ for~\eqref{Fowler} in the sense of Definition~\ref{def:TWS}.
Such a TWS $(\profile,\RDspeed)$ satisfies the TWE
  \begin{equation} \label{Fowler:TWE}
    h(\profile) := f(\profile) - f(\um) - c(\profile -\um)
     = \delta\profile' - \epsilon\Dalpha \profile \ , \quad \xi\in\R \ .
  \end{equation} 
For $\delta=0$, the TWE reduces to a fractional differential equation $\epsilon\Dalpha \profile = -h(\profile)$,
 which has been analyzed in~\cite{Achleitner+Hittmeir+Schmeiser:2011,Cuesta+Achleitner:2017} for monostable functions~$-h(u)$.

Equation~\eqref{Fowler:TWE} is also the TWE for a TWS $(\profile,\delta)$ of an evolution equation 
\begin{equation} \label{RD:Fowler}
  \sdiff{u}{t} = -\epsilon\Dalpha u - h(u), \quad x\in\R, \quad t>0.
\end{equation}
For $\epsilon>0$, the operator is $-\epsilon \Dalpha \profile$ is a Riesz-Feller operator~$\epsilon \RieszFellerII{\alpha}{-\alpha}$
 whose heat kernel~$\GreenII{\alpha}{-\alpha}$ has only support on a halfline.
For a shock triple $\ShockTriple$ satisfying the Rankine-Hugoniot condition~\eqref{RH}, at least $h(\upm)=0$ holds.
Under these assumptions,
 equation~\eqref{RD:Fowler} is a reaction-diffusion equation
 with a Riesz-Feller operator modeling diffusion.

The abstract method in~\cite{Alvarez-Samaniego+Azerad:2009} does not provide any information on the far-field behavior.
Thus, assume the existence of a TWS $\TWS$ in the sense of Definition~\ref{def:TWS},
 for some shock triple $\ShockTriple$ satisfying the Rankine-Hugoniot condition~\eqref{RH}.
Again, a necessary condition is obtained by multiplying TWE~\eqref{Fowler:TWE} with $\profile'$ and integrating on $\R$;
hence, 
\begin{equation} \label{Fowler:TWS:NC}
 \integrall{\um}{\up}{h(u)}{u} = \integral{\R}{(\profile')^2}{\xi} - \integral{\R}{\profile' \Dalpha \profile}{\xi} \ .
\end{equation}
The left hand side is indefinite since each integral is non-negative, see also~\eqref{ineq:Caputo}.

For a cubic flux function $f(u)=u^3$ and a shock triple $\ShockTriple$ satisfying the Rankine-Hugoniot condition~\eqref{RH},
 we deduce a bistable reaction function $r(u)=-h(u)$ as long as $\up<-\up-\um<\um$ see Figure~\ref{fig:classification:cubic}.
However, since the heat kernel has only support on a halfline,
 we can not obtain a strict comparison principle as needed in Chen's approach~\cite{Chen:1997,Achleitner+Kuehn:2015,Achleitner+Kuehn:201x}.

\appendix

\section{Caputo fractional derivative on \texorpdfstring{$\R$}{R}}  
 \label{sec:Caputo}
 For $\alpha>0$, the (Gerasimov-)Caputo derivatives are defined as, see~\cite{Kilbas+etal:2006, Uchaikin:2013:bothVolumes},
 \begin{align*}
  (\Caputo{+} f)(x) &=
	  \begin{cases}
      f^{(n)}(x) & \text{if } \alpha=n\in\N_0 \ , \\
	    \tfrac{1}{\Gamma(n-\alpha)}\integrall{-\infty}{x}{ \frac{f^{(n)}(y)}{(x-y)^{\alpha-n+1}} }{y}  & \text{if } n-1 <\alpha<n \text{ for some } n\in\N_0 \ .
	  \end{cases}
	\\
	(\Caputo{-} f)(x) &= 
	  \begin{cases}
      f^{(n)}(x) & \text{if } \alpha=n\in\N_0 \ , \\
	    \tfrac{(-1)^n}{\Gamma(n-\alpha)}\integrall{x}{\infty}{ \frac{f^{(n)}(y)}{(y-x)^{\alpha-n+1}} }{y}  & \text{if } n-1 <\alpha<n \text{ for some } n\in\N_0 \ .
	  \end{cases}
 \end{align*}
 properties:
 \begin{itemize}
  \item For $\alpha>0$ and $\lambda> 0$
   \[ (\Caputo{+} \exp(\lambda \cdot))(x) = \lambda^\alpha  \exp(\lambda x) \ , \quad (\Caputo{-} \exp(-\lambda \cdot))(x) = \lambda^\alpha  \exp(-\lambda x) \]
  \item For $\alpha>0$ and $f\in\mathcal{S}(\R)$, a Caputo derivative is a Fourier multiplier operator with
   $(\Fourier \Caputo{+} f)(\FVar) = (\ii\FVar)^\alpha (\Fourier f)(\FVar)$
   where $(\ii\FVar)^\alpha = \exp(\alpha\pi \ii \sgn(\FVar)/2)$.
  \item If $\profile$ is the profile of a TWS~$\TWS$ in the sense of Definition~\ref{def:TWS}, then
\begin{equation}\label{ineq:Caputo}
 \integrall{-\infty}{\infty}{ \profile'(y)\, \Caputo{+} \profile(y) }{y} 
  = \tfrac12 \integral{\R}{ \profile'(x) \integral{\R}{ \frac{\profile'(y)}{|x-y|^\alpha} }{y} }{x} 
	\geq 0 \ , 
\end{equation}
where the last inequality follows from~\cite[Theorem 9.8]{Lieb+Loss:1997}.
\end{itemize}

\section{shock wave theory for scalar conservation laws} \label{sec:ShockWaveTheory}
A standard reference on the theory of conservation laws is~\cite{Dafermos:2010},
 whereas~\cite{LeFloch:2002} covers the special topic of non-classical shock solutions.
A scalar conservation law is a partial differential equation
 \begin{equation} \label{SCL}
  \sdiff{u}{t} + \sdiff{f(u)}{x} = 0 \ , \quad t>0 \ , \quad x\in\R \ , 
 \end{equation}
 for some flux function $f:\R\to\R$.
For nonlinear functions $f$, it is well known that 
 the initial value problem (IVP) for~\eqref{SCL} with smooth initial data
 may not have a classical solution for all time $t>0$
 (due to shock formation).
However, weak solutions may not be unique.
The \emph{Riemann problems} are a subclass of IVPs for~\eqref{SCL},
 and especially important in some numerical algorithms:
For given $\um, \up\in \R$, 
 find a weak solution $u(x,t)$ for the initial value problem of~\eqref{SCL} with initial condition 
 \begin{equation} \label{ID:RiemannProblem}
  u(x,0) = 
   \begin{cases}
    \um \ , & x<0 \ , \\
    \up \ , & x>0 \,. 
   \end{cases}
 \end{equation}
Weak solutions of a Riemann problem that are discontinuous for $t>0$ may not be unique.
\begin{example}
A \textit{shock wave} is a discontinuous solution of the Riemann problem,
\begin{equation} \label{SW}
u(x,t) = 
\begin{cases}
\um \ , & x<\speed t \ , \\
\up \ , & x>\speed t \ ,
\end{cases}
\end{equation}
if the \textit{shock triple} $\ShockTriple$ satisfies the Rankine-Hugoniot condition
\begin{equation} \label{RH} 
 f(\up)-f(\um) = \speed (\up -\um) \,.  
\end{equation}
The Rankine-Hugoniot condition~\eqref{RH} is a necessary condition
 that $\upm$ are stationary states of an associated TWE~\eqref{KdVB:TWE:local:integrated}, see~\eqref{cond:up:stationary}.
\end{example}

\subsection*{shock admissibility}
Classical approaches to select a unique weak solution of the Riemann problem are 
\begin{enumerate}[label=(\alph*)]
\item \textit{Lax' entropy condition:}
   \begin{equation} \label{cond:entropy:Lax} f'(\up)<\speed<f'(\um) \ . \end{equation} 
 It ensures that in the method of characteristics all characteristics enter the shock/discontinuity of a shock solution~\eqref{SW}.
 For convex flux function~$f$, condition~\eqref{cond:entropy:Lax} reduces to $\um>\up$.
 Shocks satisfying~\eqref{cond:entropy:Lax} are also called Lax or classical shocks.
 For non-convex flux functions~$f$, also non-classical shocks can arise in experiments,
  called slow undercompressive shocks if $f'(\upm)>\speed$, 
	and fast undercompressive shocks if $f'(\upm)<\speed$.

\item \textit{Oleinik's entropy condition.}
 \begin{equation}
  \label{Oleinik:EC} 
  \frac{f(w)-f(\um)}{w-\um} \geq \frac{f(\up)-f(\um)}{\up-\um} \XX{for all $w$ between $\um$ and $\up$.}
 \end{equation}
\item \textit{entropy solutions} satisfying integral inequalities based on entropy-entropy flux pairs, such as Kruzkov's family of entropy-entropy flux pairs.
\item \textit{vanishing viscosity.}
 In the classical vanishing viscosity approach,
  instead of~\eqref{SCL} one considers for $\epsilon>0$ equation
 \begin{equation} \label{SCL+VV} 
  \sdiff{u}{t} + \sdiff{f(u)}{x} = \epsilon \sdifff{u}{x}{2} \ , \quad t>0 \ , \quad x\in\R \ , 
 \end{equation}
 where $\epsilon \sdifff{u}{x}{2}$ models diffusive effects such as friction.
 Equation~\eqref{SCL+VV} is a parabolic equation, 
  hence the Cauchy problem has global smooth solutions $u^\epsilon$ for positive times,
	especially for Riemann data~\eqref{ID:RiemannProblem}.
 An admissible weak solution of the Riemann problem is identified by studying the limit of $u^\epsilon$ as $\epsilon\searrow 0$.
 \newline
 In other applications, different higher order effects may be important. 
 For example, a nonlocal generalized KdVB equation~\eqref{KdVB:f:nonlocal} can be interpreted
  as a scalar conservation law~\eqref{SCL} with higher-order effects $\operator{R}[u] := \epsilon \Levy_1[u] +\delta \sdiff{}{x} \Levy_2[u]$.
 \newline 

Already for convex functions~$f$,
 the convergence of solutions of the regularized equations (e.g. \eqref{KdVB:f:nonlocal}) to solutions of~\eqref{SCL}
 reveals a diverse solution structure.
The solutions of viscous conservation laws~\eqref{SCL+VV} converge for $\epsilon\searrow 0$ to Kruzkov entropy solutions of~\eqref{SCL}.
In contrast, in case of KdVB equation~\eqref{KdVB:f:local} the limit $\epsilon,\delta\to 0$ depends on the relative strength of diffusion and dispersion:
 \begin{itemize}
  \item \textbf{weak dispersion} $\delta=O(\epsilon^2)$ for $\epsilon\to 0$
	 e.g. $\delta=\beta\epsilon^2$ for some $\beta>0$. \\
	 TWS converge strongly to entropy solution of Burgers equation.
  \item \textbf{moderate dispersion} $\delta=o(\epsilon)$ for $\epsilon\to 0$
	 includes weak dispersion. \\
	 TWS converge strongly to entropy solution of Burgers equation, see~\cite{Perthame+Ryzhik:2007}.
	\item \textbf{strong dispersion}
	 weak limit of TWS for $\epsilon,\delta\to 0$
	  may not be a weak solution of Burgers equation. 
 \end{itemize}

\medskip 
For non-convex flux functions~$f$,
 a TWS may converge to a weak solution of~\eqref{SCL}
 which is not an Kruzkov entropy solution,  
 but a non-classical shock.
\end{enumerate}

A simplistic shock admissibility criterion based on the vanishing viscosity approach is the existence of TWS for a given shock triple: 
\begin{definition}[compare with \cite{Jacobs+McKinney+Shearer:1995}]
A solution $u$ of the Riemann problem is called \textit{admissible} (with respect to a fixed regularization~$\operator{R}$),
if there exists a TWS $\TWS$ in the sense of Definition~\ref{def:TWS} of the regularized equation (e.g. \eqref{KdVB:f:nonlocal}) 
 for every shock wave with shock triple $\ShockTriple$ in the solution $u$.
\end{definition}

\bibliographystyle{abbrv}

\end{document}